\DeclareMathOperator{\alinv}{\alpha^{-1}}
\newtheorem{thm}{Theorem}
\newtheorem{rem}{Remark}
\newtheorem{lem}{Lemma}
\newtheorem{defn}{Definition}
\begin{document}

\title{Universal imbedding of a Hom-Lie Triple System}

\author{R. Vandermolen}

\date{}

\maketitle

%  ABSTRACT
\begin{abstract}
In this article we will build a universal imbedding of a regular Hom-Lie triple system into a Lie algebra and show that the category of regular Hom-Lie triple systems is equivalent to a full subcategory of pairs of $\mathbb{Z}_2$-graded Lie algebras and Lie algebra automorphism, then finally give some characterizations of this subcategory. 
\end{abstract}

%INTRODUCTION
\section*{Introduction}

\noindent
Ternary algebras, that is algebras with ternary multiplication are studied heavily in Lie and Jordan theories, geometry, analysis and physics. For example, Jordan triple system (\cite{jacob2},\cite{wg},\cite{quant1},\cite{extratrip}) can be realized as 3-graded Lie algebras through the TKK construction (\cite{oleg2}), from which special Lie algebras can be obtained. Also more to the heart of this study are Lie triple systems, which give rise to $\mathbb{Z}_2$-graded Lie algebras (see \cite{jacob}, \cite{oleg}), which are Lie algebras associated to symmetric spaces. 

Hom-Lie algebras and similar structures have been more recently studied, (\cite{tau},\cite{sigma}), as they lead to many interesting deformations of Lie algebras, in particular $q$-deformations which have been studied by both mathematicians and physicists (\cite{quant2},\cite{amm},\cite{quant4}). The $q$-deformed Lie algebras have been of interest in supersymmetirc gauge theory \cite{quant2}, the Yang-Baxter equation (\cite{tau2},\cite{nambu}) and especially with $q$-defermations of the Witt and Virasoro algebras (\cite{quant3},\cite{quant5},\cite{quant6},\cite{quant7}).

Extending the theory of deformations into the realm of ternary algebras has been done with the use of ternary Hom-Nambu Lie algebras in papers like (\cite{joa},\cite{amm}). Most recently it has been attempted by Yau in \cite{tau} with Hom-Lie triple systems (in the form we present them here). Here we will expand this theory by constructing a universal imbedding of a regular Hom-Lie triple system into a $\mathbb{Z}_2$ graded Lie algebra. 

While at first inspection the regular assumption may seem to be inordinately restrictive, especially since it is most popular in the research of this field to only use the assumption of multiplicative.  Yet, it is immediately present that in the finite dimensional case any simple multiplicative Hom-Lie triple systems (or regular Hom-Lie algebra) is regular. In addition, any finite dimensional multiplicative Hom-Lie triple system (reps. Hom-Lie algebra) is an extension of a regular Hom-Lie triple system (regular Hom-Lie algebra) by a Hom-Lie triple system (Hom-Lie algebra) with the trivial twisting map.

Next we will discuss the organization of this paper. In the first section basic definitions of Hom-Lie algebras, and Hom-Lie triple systems and a few consequences are considered. Furthermore, we review homotopes and isotopes of algebras, which we find as a natural setting for Hom-objects. In the second section we recall some results from \cite{tau} which show some methods of constructing Hom-Lie triple systems from Lie algebras, Hom-Lie algebras from Lie algebras, Lie algebras from Hom-Lie algebras, and even constructing Lie algebras from Hom-Lie triple systems. In the third section we introduce the concept of universal imbedding and build an imbedding for a Hom-Lie triple system into a Lie algebra, and a category isomorphism between regular Hom-Lie algebras and Lie algebras. While in the fourth section we construct the universal imbedding of a Hom-Lie triple system and finally in section five we describe the category of Lie algebras that is equivalent to the Hom-Lie triple systems. 

%BASIC DEFINITIONS
\section{Basic Definitions}
In this section we recall a few definitions. Throughout this article $k$ will  be a commutative ring with a multiplicative identity, have $\text{Char}(k)\neq 2$, and will commonly be omitted. 

Our fist definition is that of a Hom-Lie algebra, which we will see is equivalent to Lie algebra under a certain condition. Yet for our treatment is all we need.

%DEFINTION OF A HOM LIE
\subsection{Hom-Lie Algebras}
\begin{defn}
\emph{A} Hom-Lie Algebra \emph{is the triple $($$L$,$[$ , $], \alpha)$ consisting of a $k$-module $L$, a bilinear map $[$ , $]:L\times L\rightarrow L$, and a linear map $\alpha:L\rightarrow L$ such that for all $a,b,c\in L$.}

\begin{equation}\label{lie1}
 [a,a]=0
 \end{equation}
 \begin{equation}\label{lie2}
[\alpha(a),[b,c]]+[\alpha(b),[c,a]]+[\alpha(c),[a,b]]=0
\end{equation}

\end{defn}
\begin{rem}
\emph{Identity (\ref{lie1}) with our conditions on $k$, i.e. $\text{Char}(k)\neq 2$, is equivalent to}
\begin{equation*}
[a,b]=-[b,a]
\end{equation*}
\emph{for all $a,b\in L$, and is known as the} skew symmetry identity.  \emph{While identity (\ref{lie2}) is known as the the} twisted Jacobi identity. 
\end{rem}
\begin{rem}
\emph{Notice that in the above definition when $\alpha=Id_L$ this definition is just the definition of a Lie algebra.}
\end{rem}
\begin{defn}
\emph{Let $(L,[$ , $]_L, \alpha_L)$ and $(L',[$ , $]_{L'}, \alpha_{L'})$ be two Hom-Lie algebras. A linear map $f:L\rightarrow L'$ is a} Hom-Lie algebra homomorphism \emph{whenever for all $a,b\in L$}
\begin{equation*}
[f(a),f(b)]_{L'}=f([a,b]_L).
\end{equation*}
\emph{and}
\begin{equation*}
f\circ\alpha=\alpha\circ f
\end{equation*}
\emph{that is the following two diagrams}

\vspace{5mm}
\begin{center}
\begin{tikzpicture}[scale=2]
\draw(0,0) node[right] {$L\otimes L$};
\draw[->] (.7,0)--(1.6,0);
\draw(1.6,0)node[right]{$L'\otimes L'$};
\draw[->](.33,-.15)--(.33,-.8);
\draw(.33,-.8)node[below]{$L$};
\draw[->] (.46,-.95)--(1.8,-.95);
\draw[->](1.98,-.16)--(1.98,-.8);
\draw(2,-.5)node[right]{$[$ , $]_{L'}$};
\draw(1,-1)node[below]{$f$};
\draw(.31,-.48)node[left]{$[$ , $]_L$};
\draw(1.8,-.95)node[right]{$L'$};
\draw(.78,.19)node[right]{$f\otimes f$};

%second diagram
\draw(4,0) node[right] {$L$};
\draw[->] (4.3,0)--(5,0);
\draw(5,0)node[right]{$L'$};
\draw[->](5.2,-.14)--(5.2,-.8);
\draw(4.54,.15)node[right]{$f$};
\draw(5.2,-.44)node[right]{$\alpha_{L'}$};
\draw[->](4.14,-.14)--(4.14,-.8);
\draw(4.14,-.44)node[left]{$\alpha_L$};
\draw(4,-.98)node[right]{$L$};
\draw[->](4.3,-.98)--(5,-.98);
\draw(4.54,-1.15)node[right]{$f$};
\draw(5.04,-.98)node[right]{$L'$};

\end{tikzpicture}
\end{center}
\emph{both commute.}
\end{defn}

\begin{defn}
\emph{ A Hom-Lie algebra, $(L$, [ , ], $\alpha$), is called}
\begin{itemize}
  \item multiplicative \emph{when $\alpha$ is a Hom-Lie algebra homomorphism.}
  \item regular \emph{when $\alpha$ is a Hom-Lie algebra automorphism.}
\end{itemize}
\end{defn}

\subsection{Hom-Lie Triple Systems}
Our next definition was first introduced by Yau in \cite{tau}.

%DEFINITION OF HOM-LTS
\begin{defn}
\emph{A} Hom-Lie Triple System \emph{is the triple $($$T$,$[$ , , $]$,$\alpha=(\alpha_1, \alpha_2$$))$ consisting of an $k$-module $T$, a trilinear map $[$ , , $]:T\times T\times T\rightarrow T$, and a pair of linear maps $\alpha_i:T\rightarrow T$ for $i=1,2$, called the twisting maps, such that for all $a,b,c,d,e\in T$,}

\begin{equation}\label{lts1}
[a,a,b]=0
\end{equation}
\begin{equation}\label{lts2}
[a,b,c]+[b,c,a]+[c,a,b]=0
\end{equation}
\label{lts3}
\begin{align}\label{lts3}
[\alpha_1(a),\alpha_2(b),[c,d,e]]=[[a,b,c],\alpha_1(d),\alpha_2(e)] & +[\alpha_1(c),[a,b,d],\alpha_2(e)]\\
& +[\alpha_1(c),\alpha_2(d),[a,b,e]]
\end{align}

\end{defn}

\begin{rem}
\emph{Notice again under our assumptions identity (\ref{lts1}) is equivalent to}
\begin{equation*}
[a,b,c]=-[b,a,c]
\end{equation*}
\emph{for all $a,b,c\in T$, and is known as the} left skew symmetry identity. \emph{while (\ref{lts3}) is known as the} ternary Hom-Nambu identity.
\end{rem}
\begin{rem}
\emph{For brevity we will refer to Hom-Lie Triple System as simply a}  Hom-LTS.
\end{rem}

\begin{defn}
\emph{Let  $(T, [$ , , $]_{T},\alpha)$ and $(T',[$ , , $]_{T'},\alpha')$ be two Hom-LTS. A linear map $f:T\rightarrow T'$ is a} Hom-LTS homomorphism  \emph{ whenever for all $a,b,c\in T$}
\begin{equation*}
[f(a),f(b),f(c)]_{T'}=f([a,b,c]_{T})  
\end{equation*}
\emph{and for $i\in\{1,2\}$, we have}
\begin{equation*}
 f\circ\alpha_i =\alpha'_i\circ f
 \end{equation*}
 \emph{that is the following two diagrams,}

\vspace{5mm}
\begin{center}
\begin{tikzpicture}[scale=2]
\draw(0,0) node[right] {$T\otimes T\otimes T$};
\draw[->] (1.1,0)--(2.33,0);
\draw(2.33,0)node[right]{$T'\otimes T'\otimes T'$};
\draw[->](.52,-.15)--(.52,-.8);
\draw(.52,-.8)node[below]{$T$};
\draw[->] (.6,-.95)--(2.78,-.95);
\draw[->](2.9,-.16)--(2.9,-.8);
\draw(2.9,-.5)node[right]{$[$ , , $]_{T'}$};
\draw(1.7,-1)node[below]{$f$};
\draw(.5,-.48)node[left]{$[$ , , $]_T$};
\draw(2.75,-.95)node[right]{$T'$};
\draw(1.2,.19)node[right]{$f\otimes f\otimes f$};

%second diagram
\draw(5,0) node[right] {$T$};
\draw[->] (5.3,0)--(6,0);
\draw(6,0)node[right]{$T'$};
\draw[->](6.2,-.14)--(6.2,-.8);
\draw(5.54,.15)node[right]{$f$};
\draw(6.2,-.44)node[right]{$\alpha'_i$};
\draw[->](5.14,-.14)--(5.14,-.8);
\draw(5.14,-.44)node[left]{$\alpha_i$};
\draw(5,-.98)node[right]{$T$};
\draw[->](5.3,-.98)--(6,-.98);
\draw(5.54,-1.15)node[right]{$f$};
\draw(6.04,-.98)node[right]{$T'$};

\end{tikzpicture}
\end{center}
\emph{both commute.}
\end{defn}

\begin{defn}
\emph{Adopting terminology from Hom-Lie algebras, considering $(T$, [ , , ], $\alpha$) a Hom-LTS, where $\alpha_1=\alpha_2=\alpha$, the Hom-LTS is called}
\begin{itemize}
  \item multiplicative \emph{when $\alpha$ is a Hom-LTS homomorphism.}
  \item regular \emph{when $\alpha$ is a Hom-LTS automorphism.}
\end{itemize}
\end{defn}

\begin{rem}
\emph{For $($$T$, $[$ , , $]$,$\alpha$$)$ a regular Hom-Lie triple system  and $a,b,c,d,e\in T$ we have the following:}
\begin{equation}\label{truths}
[a,b[c,d,e]^{\alinv}]-[c,d,[a,b,e]^{\alinv}]=[c,[a,b,d]^{\alinv}, d]+[[a,b,c]^{\alinv},d,e]
\end{equation}
\emph{where $[a,b,c]^{\alinv}\doteq\alinv([a,b,c])=[\alinv(a),\alinv(b),\alinv(c)]$.}
\end{rem}

%ISOTOPE THEORY
\subsection{Homotope Theory}
Here we review some definitions of homotope theory that will be useful to us. 
\begin{defn}
\emph{Given an associative unital $k$-algebra, $\mathcal{A}$, and an element $a\in\mathcal{A}$, the algebra $\mathcal{A}^a$ will be called} the homotope of $\mathcal{A}$. \emph{The homotope retains the same module structure of $\mathcal{A}$, yet we replace the composition of $\mathcal{A}$ by}
\begin{equation*}
x\cdot_a y\doteq x\circ a\circ y
\end{equation*}
\emph{for $x,y\in\mathcal{A}$. When $a$ is invertible we call a homotope an} isotope.
\end{defn}
\begin{rem}
\emph{It is immediately clear that from the associtivity of $\mathcal{A}$, that $\mathcal{A}^a$ is also associative.} 
\end{rem}
\begin{defn}
\emph{From the preceding remark we may then form the} homotope Lie algebra \emph{with the standard commutator, that is  $(\mathcal{A}^a, [$ , ]) is a Lie algebra with the commutator defined as,}
\begin{equation*}
[x,y]=x\cdot_a y-y\cdot_a x=x\circ a\circ y-y\circ a\circ x
\end{equation*}
\emph{for $x,y\in\mathcal{A}$}.
\end{defn}

%INDUCING LIE ALGEBRAS AND HOM-LTS
\section{Inducing Lie algebras and Hom-LTS}
There are many constructions using Hom-Lie algebras for making Hom-LTS and ternary Hom-Nambu algebras, in papers like \cite{tau} and \cite{joa}. Yet they both rely on the the trivial case for the twisting map to draw the connection between Lie algebras and Hom-LTS.  We will use this section to show the strong connection between Hom-LTS and strictly Lie algebras. 

%LIE TO HOM_LIE
\subsection{Hom-Lie algebra induced by a Lie algebra}
The following is Lemma 2.8 from \cite{tau}, in the case that is of most interest to us.
\begin{lem}\label{ho}
\emph{Let $(L,[$ , $])$ be a Lie algebra and $\alpha:L\rightarrow L$ be a Lie algebra homomorphism. Then ($L, [$ , $]_\alpha=\alpha\circ [$ , $],\alpha)$ is a multiplicative Hom-Lie algebra. Furthermore if $\alpha$ is a Lie algebra automorphism then ($L, [$ , $]_\alpha=\alpha\circ [$ , $],\alpha)$ is a regular Hom-Lie algebra.}
\end{lem}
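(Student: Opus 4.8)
The plan is to verify, in order, the two defining identities (\ref{lie1}) and (\ref{lie2}) of a Hom-Lie algebra for the triple $(L,[\,,\,]_\alpha,\alpha)$, then check the multiplicativity condition, and finally observe that bijectivity of $\alpha$ is exactly what promotes ``multiplicative'' to ``regular.'' Everything follows mechanically from the Lie algebra axioms of $(L,[\,,\,])$ together with the homomorphism property $\alpha([x,y])=[\alpha(x),\alpha(y)]$; there is no deep idea, so the write-up is essentially a bookkeeping exercise in pushing copies of $\alpha$ to the outside of each bracket.

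First I would dispatch (\ref{lie1}): for $a\in L$, $[a,a]_\alpha=\alpha([a,a])=\alpha(0)=0$, using only linearity of $\alpha$ and (\ref{lie1}) in $L$. Next, for the twisted Jacobi identity (\ref{lie2}) I would compute the typical summand
\begin{equation*}
[\alpha(a),[b,c]_\alpha]_\alpha=\alpha\big([\alpha(a),\alpha([b,c])]\big)=\alpha\big(\alpha([a,[b,c]])\big)=\alpha^2([a,[b,c]]),
\end{equation*}
where the homomorphism property of $\alpha$ is applied twice (once to the inner bracket, once to the outer). Summing over the three cyclic permutations of $(a,b,c)$ and pulling out $\alpha^2$ by linearity, the left-hand side of (\ref{lie2}) becomes $\alpha^2\big([a,[b,c]]+[b,[c,a]]+[c,[a,b]]\big)$, which is $\alpha^2(0)=0$ by the ordinary Jacobi identity in $L$.

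For multiplicativity I would check that $\alpha$ is a Hom-Lie algebra endomorphism of $(L,[\,,\,]_\alpha,\alpha)$: the condition $\alpha\circ\alpha=\alpha\circ\alpha$ is automatic, and for the bracket one has $\alpha([a,b]_\alpha)=\alpha^2([a,b])=\alpha([\alpha(a),\alpha(b)])=[\alpha(a),\alpha(b)]_\alpha$, so the required square commutes. Finally, if $\alpha$ is a Lie algebra automorphism it is in particular a Lie algebra homomorphism, so the above already shows $(L,[\,,\,]_\alpha,\alpha)$ is a multiplicative Hom-Lie algebra; since $\alpha$ is moreover bijective it is a Hom-Lie algebra automorphism (its inverse being automatically a homomorphism, as the set-theoretic inverse of a bijective homomorphism), and this is precisely the definition of regular.

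I do not anticipate a genuine obstacle. The only point demanding care is the exponent bookkeeping in the computation above: one must resist the temptation to stop after a single application of the homomorphism identity, since the outer $\alpha$ coming from the definition of $[\,,\,]_\alpha$ must also be distributed before the Jacobi identity of $L$ can be invoked.
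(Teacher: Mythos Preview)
Your argument is correct and is the standard direct verification: pull two copies of $\alpha$ outside each summand and then invoke the ordinary Jacobi identity. The paper itself does not give a proof of this lemma at all; it simply records the statement as a special case of Lemma~2.8 in \cite{tau} and moves on. So there is nothing to compare against beyond noting that your computation is exactly the routine check one would expect the cited reference to contain.
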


%HOM-LIE TO LIE
\subsection{Lie algebra induced by a Hom-Lie algebra}
The next Lemma is a rewording of Theorem 2.5 in \cite{tau}, for the case we will be first examining. 
\begin{lem}\label{hoho}
\emph{Let ($L, [$ , $],\alpha)$ be a regular Hom-Lie algebra then ($L$, [ , $]_\alpha$) is a Lie algebra, where $[a,b]_\alpha=\alinv([a,b])$ for all $a,b\in L$. }
\end{lem}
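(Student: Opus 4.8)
The plan is to verify the two Lie algebra axioms for $(L,[\ ,\ ]_\alpha)$ directly, using the Hom-Lie axioms for $(L,[\ ,\ ],\alpha)$ together with the fact that $\alpha$ (hence $\alpha^{-1}$) is a Hom-Lie algebra automorphism. Throughout I will freely use that $\alpha^{-1}$ is linear, bijective, and commutes with the bracket in the sense $\alpha^{-1}([a,b]) = [\alpha^{-1}(a),\alpha^{-1}(b)]$, which follows by applying $\alpha^{-1}$ to both sides of $[\alpha(x),\alpha(y)] = \alpha([x,y])$.

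First I would check skew symmetry: since $[a,a]_\alpha = \alpha^{-1}([a,a]) = \alpha^{-1}(0) = 0$ by \eqref{lie1}, and $\mathrm{Char}(k)\neq 2$, bilinearity of $[\ ,\ ]_\alpha$ gives $[a,b]_\alpha = -[b,a]_\alpha$. This step is immediate. The substance is the Jacobi identity for $[\ ,\ ]_\alpha$, i.e.
\begin{equation*}
[a,[b,c]_\alpha]_\alpha + [b,[c,a]_\alpha]_\alpha + [c,[a,b]_\alpha]_\alpha = 0.
\end{equation*}
I would expand one summand: $[a,[b,c]_\alpha]_\alpha = \alpha^{-1}\big([a,\alpha^{-1}([b,c])]\big)$. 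The idea is to rewrite $a = \alpha(\alpha^{-1}(a))$ so that the inner argument of the outer bracket has the form $[\alpha(\text{something}),[\ ,\ ]]$, matching the left-hand side of the twisted Jacobi identity \eqref{lie2}. Concretely, set $a' = \alpha^{-1}(a)$, $b' = \alpha^{-1}(b)$, $c' = \alpha^{-1}(c)$; then $[b,c] = [\alpha(b'),\alpha(c')] = \alpha([b',c'])$, so $\alpha^{-1}([b,c]) = [b',c']$, and thus $[a,\alpha^{-1}([b,c])] = [\alpha(a'),[b',c']]$. Summing the three cyclic terms inside a single $\alpha^{-1}$ yields $\alpha^{-1}\big([\alpha(a'),[b',c']] + [\alpha(b'),[c',a']] + [\alpha(c'),[a',b']]\big)$, which is $\alpha^{-1}(0) = 0$ by \eqref{lie2} applied to $a',b',c'$.

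The main obstacle, such as it is, is purely bookkeeping: making sure the substitution $x \mapsto \alpha^{-1}(x)$ is applied consistently in all three cyclic terms and that the automorphism property is invoked in the right direction (once to move $\alpha^{-1}$ through the inner bracket, and implicitly to know $\alpha^{-1}$ exists and is a homomorphism). There is no genuine difficulty beyond this; the regularity hypothesis is exactly what makes the rewriting legitimate. I would close by remarking that this is essentially the specialization of Theorem 2.5 of \cite{tau} to the regular case, recorded here in the form needed later.
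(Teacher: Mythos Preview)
Your proof is correct. The paper itself does not supply a proof of this lemma at all; it merely records the statement as a specialization of Theorem~2.5 in \cite{tau} and moves on. Your direct verification---checking skew symmetry trivially and reducing the Jacobi identity for $[\,,\,]_\alpha$ to the twisted Jacobi identity for $[\,,\,]$ via the substitution $a'=\alpha^{-1}(a)$, $b'=\alpha^{-1}(b)$, $c'=\alpha^{-1}(c)$ and the automorphism property of $\alpha^{-1}$---is exactly the argument one would give, and it is carried out cleanly. So you have in fact supplied more detail than the paper does; your closing remark that this is the regular case of Yau's Theorem~2.5 already matches the paper's only contribution on this point.
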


%LIE TO HOM-LTS 
\subsection{Hom-LTS induced by a Lie algebra}
In \cite{tau} Corollary 3.14 Yau uses a standard construction of a Jordan Algebra to make a multiplicative Hom-Jordan triple system. The following Lemma is an analogous result for Lie algebras and Hom-LTS.
\begin{lem}\label{funclem}
\emph{Let $(L,[$ , $])$ be a Lie algebra and $\alpha : A\rightarrow A$ be a Lie algebra homomorphism. Then $(L,[$ , , $]_\alpha=\alpha\circ[$ , , $], \alpha )$ is a multiplicative Hom-LTS, where for $a,b,c\in L$ we define $[a,b,c]=[[ab],c]$. Furthermore if $\alpha$ is a Lie automorphism then $(L,[$ , , $]_\alpha=\alpha\circ[$ , , $], \alpha )$ is a regular Hom-LTS. }
\end{lem}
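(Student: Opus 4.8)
The plan is to verify the three Hom-LTS axioms (\ref{lts1}), (\ref{lts2}), (\ref{lts3}) directly for the triple $(L,[\,,\,,\,]_\alpha,\alpha)$, where $[a,b,c]=[[a,b],c]$ is the usual triple product of a Lie algebra and $[a,b,c]_\alpha=\alpha([[a,b],c])$, and then to check that $\alpha$ is a Hom-LTS homomorphism (resp.\ automorphism). The first two axioms are cheap: $[a,a,b]_\alpha=\alpha([[a,a],b])=\alpha([0,b])=0$ gives (\ref{lts1}); and applying $\alpha$ to the classical identity $[[a,b],c]+[[b,c],a]+[[c,a],b]=0$ (which is just the ordinary Jacobi identity for $L$, after a sign rearrangement) gives (\ref{lts2}) by linearity of $\alpha$.

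The substantive step is the ternary Hom-Nambu identity (\ref{lts3}). Here I would expand each of the four terms using the definitions. The left-hand side is
\[
[\alpha(a),\alpha(b),[c,d,e]_\alpha]_\alpha=\alpha\bigl([[\alpha(a),\alpha(b)],\alpha([[c,d],e])]\bigr),
\]
and since $\alpha$ is a Lie algebra homomorphism, $[\alpha(a),\alpha(b)]=\alpha([a,b])$ and the inner $\alpha$ can be pulled out, so this equals $\alpha^3\bigl([[a,b],[[c,d],e]]\bigr)$ (writing $\alpha^2,\alpha^3$ for iterates of $\alpha$, justified again by homomorphy). Similarly each term on the right-hand side collapses to $\alpha^3$ applied to a classical bracket expression: $[[a,b,c]_\alpha,\alpha(d),\alpha(e)]_\alpha=\alpha^3\bigl([[[[a,b],c],d],e]\bigr)$, and likewise for the other two terms. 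After factoring out $\alpha^3$ (legitimate by linearity), what remains to prove is the ordinary identity
\[
[[a,b],[[c,d],e]]=[[[[a,b],c],d],e]+[[c,[[a,b],d]],e]+[[c,d],[[a,b],e]]
\]
in the Lie algebra $L$. This is precisely the classical fact that $\mathrm{ad}([a,b])$ acts as a derivation of the triple product $[[-,-],-]$ — equivalently, it is the statement that a Lie algebra is a Lie triple system under $[[x,y],z]$, which follows from two applications of the Jacobi identity. I expect this bookkeeping — tracking which $\alpha$'s come from the outer twist and which from pushing $\alpha$ through brackets, and confirming everything reduces to a single factor of $\alpha^3$ times a classical identity — to be the only real obstacle, and it is a routine though slightly tedious computation.

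Finally, multiplicativity: $\alpha$ is linear by hypothesis, and $[\alpha(a),\alpha(b),\alpha(c)]_\alpha=\alpha([[\alpha(a),\alpha(b)],\alpha(c)])=\alpha(\alpha([[a,b],c]))=\alpha([a,b,c]_\alpha)$, using that $\alpha$ is a Lie homomorphism; and $\alpha\circ\alpha=\alpha\circ\alpha$ trivially gives the compatibility of $\alpha$ with the twisting map. Hence $\alpha$ is a Hom-LTS homomorphism and $(L,[\,,\,,\,]_\alpha,\alpha)$ is a multiplicative Hom-LTS. If in addition $\alpha$ is a Lie algebra automorphism, then it is bijective with inverse $\alpha^{-1}$ also a Lie homomorphism, so by the same computation $\alpha$ is a Hom-LTS automorphism and the system is regular. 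This mirrors Lemma~\ref{ho} and Lemma~\ref{funclem}'s analogue in the binary case, so I would also remark that the construction is functorial and compatible with the earlier lemmas.
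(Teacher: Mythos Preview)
Your direct verification is correct and is exactly the kind of argument the paper has in mind: the paper does not actually supply a proof of this lemma, presenting it instead as the Lie-algebra analogue of Yau's Corollary~3.14 and leaving the routine computation to the reader. One minor slip worth fixing: when you expand $[\alpha(a),\alpha(b),[c,d,e]_\alpha]_\alpha$ and the three right-hand terms, the common factor that emerges is $\alpha^{2}$, not $\alpha^{3}$ --- each side carries one $\alpha$ from the outer twisted triple product and one more from pushing $\alpha$ through the inner brackets via the homomorphism property. Since the same power appears on both sides and factors out, the argument is unaffected.
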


%HOM-LTS TO LIE
\subsection{Lie Algebra induced by a Hom-LTS}
Before diving into this next construction we will make the assumption for the rest of the paper that $\alpha$ is an algebra automorphism. 

\begin{defn}
\emph{For a regular Hom-Lie triple system $($$T$, $[$ , , $]$,$\alpha$$)$  we will define $\mathcal{HDR}(T)\doteq\{X\in End(T) |\; (X\circ\alinv)[cde]=[X^{\alinv}c,d,e]+[c,X^{\alinv}d,e] +[c,d,X^{\alinv} e] \}$ where $X^{\alinv}\doteq\alinv\circ X$. It is immediately obvious that this is a closed sub-vector space of $End(T)$.}
\end{defn}

\begin{rem}
\emph{It turns out that $\mathcal{HDR}(T)$ is a sub-lie algebra of the isotope Lie algebra of $\text{End}(T)^{\alinv}$. That is for $X,Y\in\mathcal{HDR}(T)$ we have $[X,Y]\in\mathcal{HDR}(t)$ where $[X,Y]\doteq X\circ\alinv\circ Y-Y\circ\alinv\circ X$.}
\end{rem}

\begin{defn}
\emph{Furthermore we will define $\mathcal{IHD}(T)\doteq\{ D_{ab} |\; a,b\in T\}$, where $D_{ab}(c)=[a,b,c]$.}
\end{defn}

\begin{rem}
\emph{Notice that $\mathcal{IHD}(T)$ is an ideal of $\mathcal{HDR}(T)$ since for $X\in\mathcal{HDR}(T)$ and $D_{ab}\in\mathcal{IHD}(T)$ we have the following.}
\begin{equation}\label{need1}
[X,D_{ab}]=D_{aX^{\alinv}(b)}+D_{X^{\alinv}(a),b}
\end{equation}
\emph{Which makes $\mathcal{IHD}(T)$ a Lie algebra.}
\end{rem}

\begin{lem}\label{pooper}
\emph{For a regular Hom-LTS $($$T$, $[$ , , $]$,$\alpha$$)$  we can induce a Lie algebra $\mathcal{IHD}(T)$ defined as above.}
\end{lem}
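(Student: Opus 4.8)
The plan is to verify directly that $\mathcal{IHD}(T)$ is closed under the bracket $[X,Y] \doteq X\circ\alinv\circ Y - Y\circ\alinv\circ X$ inherited from the isotope Lie algebra $\text{End}(T)^{\alinv}$, and that this bracket satisfies the Lie algebra axioms. Since $\text{End}(T)^{\alinv}$ is already known to be a Lie algebra (it is the homotope/isotope construction from Section 1, and $\alinv$ is invertible because $\alpha$ is assumed to be an automorphism), the antisymmetry and Jacobi identity come for free once closure is established; so the entire content of the lemma reduces to showing $\mathcal{IHD}(T)$ is a subspace closed under $[\,\cdot\,,\cdot\,]$.

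First I would record that $\mathcal{IHD}(T) = \{D_{ab} \mid a,b\in T\}$ is a $k$-submodule of $\text{End}(T)$: it is the image of the bilinear map $(a,b)\mapsto D_{ab}$, and $k$-linear combinations of such operators are again of this form after using bilinearity of the triple product in the first two slots (one may need to pass to finite sums $\sum_i D_{a_i b_i}$, but these still lie in the span, which is what "subspace" requires — I would phrase the definition of $\mathcal{IHD}(T)$ as this span to be safe). Next, the key computational step is to establish the bracket formula
\begin{equation*}
[D_{ab}, D_{cd}] = D_{a,D_{cd}^{\alinv}(b)} + D_{D_{cd}^{\alinv}(a),\,b},
\end{equation*}
or equivalently to recognize that each $D_{ab}$ lies in $\mathcal{HDR}(T)$ and then invoke the already-stated relation (\ref{need1}) with $X = D_{cd}$: that identity says precisely $[D_{cd},D_{ab}] = D_{a,D_{cd}^{\alinv}(b)} + D_{D_{cd}^{\alinv}(a),b} \in \mathcal{IHD}(T)$. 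To apply (\ref{need1}) I must first check $D_{ab}\in\mathcal{HDR}(T)$, i.e. that $(D_{ab}\circ\alinv)[cde] = [D_{ab}^{\alinv}c,d,e] + [c,D_{ab}^{\alinv}d,e] + [c,d,D_{ab}^{\alinv}e]$; unwinding $D_{ab}^{\alinv} = \alinv\circ D_{ab}$ and $D_{ab}(x) = [a,b,x]$, this is exactly the ternary Hom-Nambu identity (\ref{lts3}) after applying $\alinv$ and using regularity in the form $\alinv([x,y,z]) = [\alinv x,\alinv y,\alinv z]$. So the logical skeleton is: (i) $D_{ab}\in\mathcal{HDR}(T)$ via (\ref{lts3}); (ii) $\mathcal{HDR}(T)$ is a Lie subalgebra of $\text{End}(T)^{\alinv}$ (stated); (iii) (\ref{need1}) shows $[\mathcal{HDR}(T),\mathcal{IHD}(T)]\subseteq\mathcal{IHD}(T)$, in particular $[\mathcal{IHD}(T),\mathcal{IHD}(T)]\subseteq\mathcal{IHD}(T)$; (iv) conclude $\mathcal{IHD}(T)$ is a Lie algebra as a subalgebra of $\text{End}(T)^{\alinv}$.

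I expect the main obstacle to be purely bookkeeping: carefully matching the ternary Hom-Nambu identity (\ref{lts3}), which is stated with the twisting maps $\alpha_1,\alpha_2$ appearing on the outside, against the membership condition for $\mathcal{HDR}(T)$, which is stated with $\alinv$'s applied to the inputs — this requires systematically using that $\alpha$ is an automorphism commuting with the triple bracket (so $\alpha$ and $\alinv$ can be moved freely in and out of $[\,\cdot\,,\cdot\,,\cdot\,]$), and being careful about which slot each twist lands in. The algebraic identities themselves are routine substitutions; the only real risk is a sign or index error in the three-term expansion. Since relation (\ref{need1}) is already asserted in the preceding remark, the cleanest exposition is to lean on it directly and simply note that $\mathcal{IHD}(T)\subseteq\mathcal{HDR}(T)$ makes the right-hand side of (\ref{need1}) an element of $\mathcal{IHD}(T)$, giving closure; everything else is inherited from the ambient Lie algebra $\text{End}(T)^{\alinv}$.
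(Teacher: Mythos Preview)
Your proposal is correct and follows essentially the same route as the paper: the paper gives no separate proof of this lemma, relying instead on the preceding remarks that $\mathcal{HDR}(T)$ is a Lie subalgebra of the isotope $\mathrm{End}(T)^{\alinv}$ and that identity~(\ref{need1}) makes $\mathcal{IHD}(T)$ an ideal of $\mathcal{HDR}(T)$, exactly the skeleton (i)--(iv) you outline. The only slip is a harmless sign inconsistency between your two displayed bracket formulas (swapping $[D_{ab},D_{cd}]$ and $[D_{cd},D_{ab}]$), which does not affect the closure conclusion.
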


%UNIVERSALITY
\section{The Category of Lie algebras, Hom-LTS and Hom-Lie algebras}
For our next construction we will need to define the category of Lie algebras we will be concerned with. 
\begin{defn}
\emph{We will denote, $\mathcal{TL}$ as the category with the pairs $(L, \alpha)$, where $L$ is a Lie algebra and $\alpha:L\rightarrow L$ is a Lie algebra automorphism, as objects and a Lie algebra homomorphism $f:L_1\rightarrow L_2$ is a morphism from $(L_1,\alpha_1)$ to $(L_2,\alpha_2)$ if and only if $\alpha_2\circ f=f\circ\alpha_1$, that is the following diagram,}

\vspace{5mm}
\begin{center}
\begin{tikzpicture}[scale=2]
\draw(0,0) node[right] {$L_1$};
\draw[->] (.3,0)--(1,0);
\draw(1,0)node[right]{$L_2$};
\draw[->](1.2,-.14)--(1.2,-.8);
\draw(.54,.15)node[right]{$f$};
\draw(1.2,-.44)node[right]{$\alpha_2$};
\draw[->](.14,-.14)--(.14,-.8);
\draw(.14,-.44)node[left]{$\alpha_1$};
\draw(0,-.98)node[right]{$L_1$};
\draw[->](.3,-.98)--(1,-.98);
\draw(.54,-1.15)node[right]{$f$};
\draw(1.04,-.98)node[right]{$L_2$};

\end{tikzpicture}
\end{center}
\emph{commutes.}
\end{defn}
\begin{defn}
\emph{Furthermore will denote the category of regular Hom-Lie algebras as $\mathcal{RHLA}$.}
\end{defn}
\begin{rem}
\emph{From Lemma \ref{ho} and Lemma \ref{hoho} we can build a category isomorphism $\mathcal{L}:\mathcal{TL}\rightarrow\mathcal{RHLA}$.}
\end{rem}
\begin{defn}
\emph{We will also denote the category of all regular Hom-LTS as $\mathcal{RLTS}$. }
\end{defn}
\begin{defn}
\emph{From Lemma \ref{funclem} we can create a functor $\mathfrak{T}: \mathcal{TL}\rightarrow\mathcal{RLTS} $}
\end{defn}
\begin{rem}
\emph{Note the condition on the morphisms in the category $\mathcal{TL}$, is the same condition on the homomorphisms of a Hom-LTS (resp. Hom-Lie algebra), and thus $f$ is a morphism of the category $\mathcal{TL}$ if and only if $\mathfrak{T}f$ ($\mathfrak{L}f$) is a morphism of the category $\mathcal{RLTS}$ ($\mathcal{RHLA}$).}
\end{rem}

\subsection{Imbedding and Universality}
\begin{defn}
\emph{If $(T,[$ , , ],$\alpha_T)$ is a regular Hom-LTS and $(L,\alpha_L)$ is an object of $\mathcal{TL}$ then a Hom-LTS homomorphism $\epsilon: T\rightarrow\mathfrak{T}L$ is called an} imbedding \emph{of the regular Hom-LTS $(T,$[ , , ], $\alpha_T$) into $(L,\alpha_L)$.}
\end{defn}

\begin{defn}
\emph{ An imbedding $\nu:T\rightarrow \mathfrak{T}U$ into a Lie algebra $U$ is called \emph{universal} if for every imbedding $\epsilon:T\rightarrow\mathfrak{T}L$ there is a unique morphism of the objects $(L,\alpha_L)$ and $(U,\alpha_U)$, $\varphi:U\rightarrow L$ such that the diagram below is commutes. }
\end{defn}

\vspace{5mm}
\begin{center}
\begin{tikzpicture}[scale=2]
\draw(0,0) node[right] {T};
\draw[->] (.3,0)--(1,0);
\draw(1,0)node[right]{$\mathfrak{T}U$};
\draw[->](1.2,-.14)--(1.2,-.8);
\draw(.54,.15)node[right]{$\nu$};
\draw(1.2,-.44)node[right]{$\mathfrak{T}\varphi$};
\draw[->](.3,-.1)--(1,-.8);
\draw(.93,-.98)node[right]{$\mathfrak{T}L$};
\draw(.44,-.52)node[right]{$\epsilon$};
\end{tikzpicture}
\end{center}

So far no one has attempted a theory for this (to the authors knowledge). So we will be making some very reasonable assumptions, ones which align with the researchers in the area of interest, and developing a similar theory to that found in \cite{jacob} and \cite{oleg} for Lie triple systems.  This theory will be equivalent to the functor $\mathfrak{T}:\mathcal{TL}\rightarrow\mathcal{RLTS}$ having a left adjoint.

%IMBEDDING 
\subsection{Imbedding a Hom-LTS into a Lie Algebra}
Before moving on to our universal structure we will extend Lemma \ref{pooper} to build an imbedding of a regular Hom-LTS into a Lie Algebra. 

\begin{defn}\label{the}
\emph{Let $(T,[$ , , ],$\alpha$) be a regular Hom-LTS, then define $\mathcal{GHE}(T)\doteq\mathcal{IHD}(T)\oplus T$, with a product defined for $X+a,Y+b\in\mathcal{GHE}(T)$ as}
\begin{equation}\label{tooth}
[X+a,Y+b]\doteq[X,Y]+D_{ab}+X^{\alinv}(b)-Y^{\alinv}(b)
\end{equation}
\end{defn}
\begin{lem}\label{poop}
\emph{With the above definition $\mathcal{GHE}(T)$ is a Lie algebra, furthermore $\hat{\alpha}:\mathcal{GHE}(T)\rightarrow\mathcal{GHE}(T)$, defined as $\hat{\alpha}|_T\doteq\alpha$ and $\hat{\alpha}(D_{ab})=D_{\alpha(a)\alpha(b)}$ is a Lie algebra automorphism.}
\end{lem}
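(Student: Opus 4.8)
The plan is to present $\mathcal{GHE}(T)$ as a $\mathbb{Z}_2$-graded Lie algebra with degree-zero part $L_0\doteq\mathcal{IHD}(T)$ and degree-one part $L_1\doteq T$. First I would record that, reading the cross term in \eqref{tooth} as $X^{\alinv}(b)-Y^{\alinv}(a)$ (the form forced by skew-symmetry), the bracket is graded: $[L_0,L_0]\subseteq L_0$ by Lemma \ref{pooper}, $[L_0,L_1]\subseteq L_1$, and $[L_1,L_1]\subseteq L_0$; and that it specializes on homogeneous elements to $[X,c]=X^{\alinv}(c)$ and $[c,X]=-X^{\alinv}(c)$ for $c\in T$ and $X\in L_0$, to $[a,b]=D_{ab}$ for $a,b\in T$, and to the isotope bracket on $L_0$. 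Skew-symmetry is then immediate, since $[X,X]=0$, $D_{aa}=0$ by left skew-symmetry, and $X^{\alinv}(a)-X^{\alinv}(a)=0$.

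Next I would check the Jacobi identity; by bilinearity and the grading it suffices to verify the cyclic sum $J(u,v,w)=[[u,v],w]+[[v,w],u]+[[w,u],v]$ on homogeneous triples, giving four cases. For $(L_0,L_0,L_0)$ this is Lemma \ref{pooper}. For $(L_0,L_0,L_1)$, expanding the specialized brackets leaves $[X,Y]^{\alinv}(c)-X^{\alinv}Y^{\alinv}(c)+Y^{\alinv}X^{\alinv}(c)$, which vanishes by the one-line operator identity $[X,Y]^{\alinv}=\alinv\circ(X\circ\alinv\circ Y-Y\circ\alinv\circ X)=X^{\alinv}Y^{\alinv}-Y^{\alinv}X^{\alinv}$. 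For $(L_0,L_1,L_1)$, substituting \eqref{need1} for $[X,D_{bc}]$ and using $D_{uv}=-D_{vu}$ makes all four resulting $D$-terms cancel in pairs. For $(L_1,L_1,L_1)$, the sum equals $\alinv\big([a,b,c]+[b,c,a]+[c,a,b]\big)$, which is $0$ by \eqref{lts2}. (The ternary Hom-Nambu identity \eqref{lts3} enters here only indirectly, as the fact that $\mathcal{IHD}(T)\subseteq\mathcal{HDR}(T)$, which is what makes \eqref{need1} applicable to $X\in L_0$.) This establishes the Lie algebra structure.

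For the automorphism statement, the key preliminary is that since $\alpha$ is a Hom-LTS automorphism we have $\alpha([a,b,c])=[\alpha(a),\alpha(b),\alpha(c)]$, hence $D_{\alpha(a)\alpha(b)}=\alpha\circ D_{ab}\circ\alinv$; that is, $\hat{\alpha}|_{L_0}$ is conjugation by the unit $\alpha$ of $\mathrm{End}(T)$. From this I would conclude that $\hat{\alpha}$ is a well-defined linear endomorphism of $\mathcal{GHE}(T)$ (its value on $L_0$ is independent of how an element is written in terms of the $D_{ab}$), that it preserves the decomposition $L_0\oplus L_1$, and that it is bijective, with inverse $D_{ab}\mapsto D_{\alinv(a)\alinv(b)}$ on $L_0$ and $\alinv$ on $L_1$. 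Then I would verify that $\hat{\alpha}$ preserves \eqref{tooth} term by term: $\hat{\alpha}([X,Y])=[\hat{\alpha}X,\hat{\alpha}Y]$ because conjugation by $\alpha$ is an algebra homomorphism of the isotope $\mathrm{End}(T)^{\alinv}$; $\hat{\alpha}(D_{ab})=D_{\alpha(a)\alpha(b)}$ by definition; and $\alpha(X^{\alinv}(b))=X(b)=(\hat{\alpha}X)^{\alinv}(\alpha(b))$ using $\alpha\circ\alinv=\mathrm{id}$, with the analogous identity for the $Y$-term. Adding these up gives $\hat{\alpha}([X+a,Y+b])=[\hat{\alpha}(X+a),\hat{\alpha}(Y+b)]$, so $\hat{\alpha}$ is a Lie algebra automorphism.

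I do not expect a genuine obstacle: everything reduces to routine bilinear bookkeeping, and the only points requiring care are the placement of the $\alinv$'s that the isotope bracket produces and the repeated use of left skew-symmetry in the $D$-calculations. If anything is difficult here it is purely organizational: choosing the $\mathbb{Z}_2$-grading so that the Jacobi identity splits cleanly into the four cases above, and keeping the identity $D_{\alpha(a)\alpha(b)}=\alpha\circ D_{ab}\circ\alinv$ explicit so that the well-definedness of $\hat{\alpha}$ on $\mathcal{IHD}(T)$ is not taken for granted.
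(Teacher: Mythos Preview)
Your proposal is correct and, where it overlaps with the paper, follows the same direct-verification approach: the paper declares the Lie-algebra check ``straightforward'' and omits it, then verifies the homomorphism property of $\hat{\alpha}$ by expanding both sides of $\hat{\alpha}([D_{ab}+c,D_{de}+f])=[\hat{\alpha}(D_{ab}+c),\hat{\alpha}(D_{de}+f)]$ on generators and noting that the analogous extension of $\alinv$ furnishes an inverse. Your $\mathbb{Z}_2$-graded four-case split is exactly the bookkeeping the paper leaves to the reader. The one substantive addition in your argument is the identity $D_{\alpha(a)\alpha(b)}=\alpha\circ D_{ab}\circ\alinv$, i.e.\ that $\hat{\alpha}|_{L_0}$ is conjugation by $\alpha$ inside $\mathrm{End}(T)$; the paper does not make this explicit, and it buys you both the well-definedness of $\hat{\alpha}$ on $\mathcal{IHD}(T)$ (independence of how an element is written as a sum of $D_{ab}$'s) and a one-line check of the homomorphism property on the $L_0$-$L_0$ part, whereas the paper's generator-by-generator computation leaves well-definedness implicit.
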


\begin{proof}
The verification that $\mathcal{GHE}(T)$ is a Lie algebra is a straightforward calculation and is left to the reader. Furthermore, we have
\begin{align*}
\hat{\alpha}([D_{ab}+c,D_{de}+f]) = & \hat{\alpha}(D_{d\alinv([a,b,e])}  + D_{\alinv([a,b,d])e}) \\
 & +\hat{\alpha}(D_{cf}+\alinv([a,b,f])-\alinv([d,e,c])) \\
= & D_{\alpha(d)[a,b,e]}+D_{[a,b,d]\alpha(e)}+D_{\alpha(c)\alpha(f)}+[a,b,f]-[d,e,c] 
\end{align*}
and
\begin{align*}
[\hat {\alpha}  (D_{ab}+c),\hat{\alpha}(D_{de}+f)]= & [D_{\alpha(a)\alpha(b)}+\alpha(c),D_{\alpha(d)\alpha(e)}+\alpha(f)] \\
= & D_{\alpha(d)[a,b,e]}+D_{[a,b,d]\alpha(e)}\\
&\;\;\; +D_{\alpha(c)\alpha(f)} + [a,b,f]-[d,e,c]
\end{align*}
for all $a,b,c,d,e,f\in T$, thus $\hat{\alpha}$ is a Lie algebra homomorphism. To see that it is an automorphism notice we may build a similar extension of $\alinv$ and that extension is indeed the inverse of $\hat{\alpha}$. 
\end{proof}

\begin{lem}
\emph{Let $($$T$, $[,,]$,$\alpha$$)$ be a regular Hom-LTS, then the canonical inclusion $\iota:T\rightarrow \mathcal{IHD}(T)\oplus T$ is an imbedding of $($$T$, $[,,]$,$\alpha$$)$ into ($\mathcal{GHE}(T),\hat{\alpha})$ and is called the} general imbedding \emph{of the regular Hom-LTS.}
\end{lem}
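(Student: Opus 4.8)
The plan is to verify directly that the canonical inclusion $\iota: T \to \mathcal{IHD}(T)\oplus T = \mathcal{GHE}(T)$, sending $a \mapsto 0 + a$, satisfies the two defining conditions of a Hom-LTS homomorphism from $(T,[\,,\,],\alpha)$ into $\mathfrak{T}(\mathcal{GHE}(T),\hat\alpha)$. Recall that $\mathfrak{T}$ equips the Lie algebra $\mathcal{GHE}(T)$ with the triple product $[x,y,z]_{\hat\alpha} = \hat\alpha([[x,y],z])$ and the twisting map $\hat\alpha$. So there are exactly two things to check: first, the intertwining relation $\iota\circ\alpha = \hat\alpha\circ\iota$, and second, the triple-product compatibility $[\iota(a),\iota(b),\iota(c)]_{\hat\alpha} = \iota([a,b,c])$ for all $a,b,c \in T$.

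The first condition is immediate from the definition of $\hat\alpha$ in Lemma \ref{poop}: since $\hat\alpha|_T = \alpha$, we have $\hat\alpha(\iota(a)) = \hat\alpha(0+a) = 0+\alpha(a) = \iota(\alpha(a))$ for all $a\in T$. For the second condition I would compute the bracket in $\mathcal{GHE}(T)$ using formula (\ref{tooth}). First, $[\iota(a),\iota(b)] = [0+a,0+b] = D_{ab} + 0$, using that the $\mathcal{IHD}$-components vanish and $0^{\alinv}(b) = 0$. Then $[[\iota(a),\iota(b)],\iota(c)] = [D_{ab}+0, 0+c] = 0 + D_{ab}^{\alinv}(c)$ (the $\mathcal{IHD}\times\mathcal{IHD}$ bracket contributes nothing since the second $\mathcal{IHD}$-component is zero, and the $T\times T$ term $D_{0,c}$ vanishes), and $D_{ab}^{\alinv}(c) = \alinv(D_{ab}(c)) = \alinv([a,b,c])$. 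Finally, applying $\hat\alpha$ gives $[\iota(a),\iota(b),\iota(c)]_{\hat\alpha} = \hat\alpha(\alinv([a,b,c])) = \alpha(\alinv([a,b,c])) = [a,b,c] = \iota([a,b,c])$, as desired. Here I must be careful to read off the correct component-wise rule for the bracket when one argument lies in $\mathcal{IHD}(T)$ and the other in $T$: from (\ref{tooth}) with $a=0$ we get $[X, Y+b] = [X,Y] + X^{\alinv}(b)$, and with the appropriate sign conventions this is exactly what feeds into the computation.

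The main obstacle — really the only subtlety — is bookkeeping with the twisting exponents and signs in (\ref{tooth}): one must confirm that the composite $\hat\alpha\circ(\text{double bracket})$ applied to elements of $T$ reproduces $[a,b,c]$ on the nose rather than some twisted variant like $\alpha([a,b,c])$ or $\alinv([a,b,c])$, and this relies precisely on the single $\alinv$ appearing in the $X^{\alinv}(b)$ term of the product being cancelled by the single $\alpha = \hat\alpha|_T$ applied at the end via $\mathfrak{T}$. I would also note in passing that $\iota$ is injective (it is literally a direct-summand inclusion), so "imbedding" in the set-theoretic sense as well as in the categorical sense of the earlier definition is justified. The name "general imbedding" is then recorded as terminology. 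No deep argument is needed; the content is entirely in matching the construction of $\mathcal{GHE}(T)$ against the definition of $\mathfrak{T}$, and Lemma \ref{poop} (that $\mathcal{GHE}(T)$ is a Lie algebra and $\hat\alpha$ an automorphism) supplies everything required for the target $(\mathcal{GHE}(T),\hat\alpha)$ to be a legitimate object of $\mathcal{TL}$.
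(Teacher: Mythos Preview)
Your proposal is correct and follows essentially the same approach as the paper: both verify $\iota\circ\alpha=\hat\alpha\circ\iota$ directly from the definition of $\hat\alpha$ and then compute $\hat\alpha([[\iota(a),\iota(b)],\iota(c)])=\hat\alpha([D_{ab},c])=\hat\alpha(\alinv([a,b,c]))=[a,b,c]$. You simply spell out the intermediate bracket computations from (\ref{tooth}) in more detail than the paper does.
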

\begin{proof}
First note that $\iota\circ\alpha=\hat{\alpha}\circ\iota$, by the definition of $\hat{\alpha}$ in Lemma \ref{poop}. Now to see that $\iota([a,b,c])=\hat{\alpha}([[\iota(a),\iota(b)],\iota(c)])$, for all $a,b,c\in T$ so we calculate
\begin{equation*}
\hat{\alpha}([[\iota(a),\iota(b)],\iota(c)])= \hat{\alpha}([D_{ab},c])=\hat{\alpha}(\alinv([a,b,c]))=[a,b,c]=\iota([a,b,c]).
\end{equation*}
\end{proof}

%GRADINGS
\subsection{$\mathbb{Z}_2$-gradings}

Recall that a Lie algebra $L$ is said to be $\mathbb{Z}_2$-\emph{graded} if $L$ is a direct sum of a pair of $k$-submodules $L_0$ and $L_1$ such that $[L_i,L_j]\subset L_{i+j}$ for any $i,j\in\mathbb{Z}_2=\{0,1\}$. The decomposition $L=L_0\oplus L_1$ is called a $\mathbb{Z}_2$-\emph{grading} of $L$. If $L=L_0\oplus L_1$ and $K=K_0\oplus K_1$ are $\mathbb{Z}_2$-graded, a homomorphism $\varphi:L\rightarrow K$ is said to be \emph{graded} provided that $\varphi(L_i)\subset K_i$ for any $i\in\mathbb{Z}_2$.

It follows immediately from Definition \ref{the} and Lemma \ref{poop} that the direct sum decomposition of the algebra $\mathcal{GHE}(T)=\mathcal{IHD}(T)\oplus\ T$ is a $\mathbb{Z}_2$-grading and $\hat{\alpha}$ is a graded automorphism.

%UNIVERSAL CONSTRUCTION
\section{The Universal Imbedding}
In this section it is our intention to construct a universal imbedding. The model for our treatment is the universal imbedding of a Lie triple system (see \cite{oleg}). 

%CONSTUCTION
\subsection{Construction of $\mathcal{U}(T)$}
Throughout this section $(T,[,,],\alpha)$ will be a  regular Hom-LTS. Our first objective is to build a central extension of $\mathcal{IHD}(T)$. 

Since $T$ is a $\mathcal{HDR}(T)$-module, we can consider the exterior product $T\wedge T$ as a $\mathcal{HDR}(T)$-module under the the unique action, motivated by (\ref{tooth}).
\begin{equation}\label{need2}
D\cdot (a\wedge b)=D^{\alinv}a\wedge b+a\wedge D^{\alinv}b
\end{equation}
for $D\in\mathcal{HDR}(T)$ and $a,b\in T$. Furthermore, formula (\ref{need1}) and identity (\ref{need2}) imply that the map $\lambda:T\wedge T\rightarrow \mathcal{HDR}(T)$ defined by $\lambda(a\wedge b)=D_{ab}$ is a module homomorphism from $T\wedge T$ to the adjoint module $\mathcal{HDR}(T)$ and that Im$(\lambda)=\mathcal{IHD}(T)$. 

It is easy to see that  $A(T\wedge T)\doteq \text{span}\{ \lambda(x)\cdot x \; : x\in T\wedge T\}$ is spanned by the elements of the form,
\begin{equation}\label{e1}
D_{ab}(a\wedge b)
\end{equation}
\begin{equation}\label{e2}
D_{ab}(c\wedge d)+D_{cd}(c\wedge d)
\end{equation}
for $a,b,c,d\in T$. From Lemma 3.1 in \cite{oleg} we have that $A(T\wedge T)$ is a sub module of $T\wedge T$. 

Next we define $\langle T,T\rangle$ as the quotient module $(T\wedge T)/ A(T\wedge T)$ and let $\langle a,b\rangle$ denote the coset containing $a\wedge b$. We then gain from Corollary 3.2 in \cite{oleg} that,
\begin{equation}
[\langle a, b\rangle, \langle c,d\rangle]=\langle D^{\alinv}_{ab}c, d\rangle + \langle c,  D^{\alinv}_{ab}d\rangle.
\end{equation} 
As well the map $\mu:\langle T, T\rangle\rightarrow\mathcal{IHD}(T)$ , defined by $\mu(\langle a, b\rangle)=D_{ab}$, is a central extension.

The following Lemma  is verified by a straightforward calculation and is thus omitted. 
\begin{lem}
\emph{The space $\mathcal{U}(T)\doteq\langle T,T\rangle\oplus T$ with the product}
\begin{equation}\label{boo}
[X+a,Y+b]=([X,Y]+\langle a,b\rangle)+(\mu^{\alinv}(X)b-\mu^{\alinv}(Y)a),
\end{equation}
\emph{where $X,Y\in\langle T,T\rangle$ and $a,b\in T$, is a $\mathbb{Z}_2$-graded Lie algebra. Moreover, the map $\nu:\mathcal{U}(T)\rightarrow\mathcal{IHD}(T)$, defined by $\nu(X+a)=\mu(X)+a$, is a central extension.}
\end{lem}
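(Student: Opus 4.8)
The plan is to verify the three assertions in turn: that $[\cdot,\cdot]$ as defined in (\ref{boo}) makes $\mathcal{U}(T)$ a Lie algebra, that the decomposition $\langle T,T\rangle\oplus T$ is a $\mathbb{Z}_2$-grading, and that $\nu$ is a central extension of $\mathcal{IHD}(T)$. For the Lie algebra structure, bilinearity is immediate, so the work is skew-symmetry and the Jacobi identity. Skew-symmetry splits by grading: on the $\langle T,T\rangle$-component one uses that $[X,Y]=-[Y,X]$ in the Lie algebra $\langle T,T\rangle$ (which we have from the bracket formula inherited via Corollary 3.2 in \cite{oleg}) together with $\langle a,b\rangle=-\langle b,a\rangle$, the latter coming from the relations (\ref{e1})–(\ref{e2}) that we quotiented by; on the $T$-component $\mu^{\alinv}(X)b-\mu^{\alinv}(Y)a$ is visibly antisymmetric under swapping $(X,a)\leftrightarrow(Y,b)$.

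For the Jacobi identity I would compute $[[X+a,Y+b],Z+c]$ and its two cyclic permutations and collect terms by grading. The degree-$0$ part (landing in $\langle T,T\rangle$) collects the Jacobi expression for $[X,Y],[Y,Z],[Z,X]$ in $\langle T,T\rangle$ — which vanishes since that is already a Lie algebra — plus cross terms of the form $\langle \mu^{\alinv}(X)b,c\rangle$ and permutations; these latter terms should cancel against each other precisely by the module-action identity $[\langle a,b\rangle,\langle c,d\rangle]=\langle D^{\alinv}_{ab}c,d\rangle+\langle c,D^{\alinv}_{ab}d\rangle$ together with (\ref{need1}), which is the translation of the ternary Hom-Nambu identity (\ref{lts3})/(\ref{truths}). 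The degree-$1$ part (landing in $T$) collects terms $\mu^{\alinv}([X,Y])c-\mu^{\alinv}(\langle a,b\rangle)$ applied appropriately, plus $\mu^{\alinv}(X)(\mu^{\alinv}(Y)c)$-type terms; these cancel because $\mu$ is a module homomorphism onto $\mathcal{IHD}(T)$ (so $\mu^{\alinv}([X,Y])=[\mu(X),\mu(Y)]^{\alinv}$ acting as the commutator in the isotope Lie algebra $\mathrm{End}(T)^{\alinv}$) and because $\mathcal{IHD}(T)$ is already known (Lemma \ref{pooper}) to be a Lie algebra acting on $T$, so the associated Leibniz/Jacobi relation for that action holds. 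In short, every piece of the Jacobi computation is forced by an identity already established; the bookkeeping is routine but lengthy, which is why I would present it as ``a straightforward calculation.''

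The $\mathbb{Z}_2$-grading is then read off directly from (\ref{boo}): writing $\mathcal{U}(T)_0=\langle T,T\rangle$ and $\mathcal{U}(T)_1=T$, the formula shows $[\mathcal{U}(T)_0,\mathcal{U}(T)_0]\subseteq\langle T,T\rangle=\mathcal{U}(T)_0$, $[\mathcal{U}(T)_0,\mathcal{U}(T)_1]\subseteq T=\mathcal{U}(T)_1$ (the term $\mu^{\alinv}(X)b$), and $[\mathcal{U}(T)_1,\mathcal{U}(T)_1]\subseteq\langle T,T\rangle=\mathcal{U}(T)_0$ (the term $\langle a,b\rangle$), which is exactly $[\mathcal{U}(T)_i,\mathcal{U}(T)_j]\subseteq\mathcal{U}(T)_{i+j}$. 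Finally, for $\nu$: it is surjective since $\mu$ is surjective onto $\mathcal{IHD}(T)$ and the $T$-summand maps to zero under $\mu$ but we only need $\nu$ onto $\mathcal{IHD}(T)$, and indeed $\nu(X+0)=\mu(X)$ already exhausts $\mathcal{IHD}(T)$. One checks $\nu$ is a Lie algebra homomorphism by comparing (\ref{boo}) with (\ref{tooth})/(\ref{need1}): $\nu([X+a,Y+b])=\mu([X,Y])+\mu(\langle a,b\rangle)+(\mu^{\alinv}(X)b-\mu^{\alinv}(Y)a)=[\mu(X),\mu(Y)]+D_{ab}+\mu(X)^{\alinv}b-\mu(Y)^{\alinv}a$, which is $[\mu(X)+a,\mu(Y)+b]$ in $\mathcal{GHE}(T)$ restricted to $\mathcal{IHD}(T)$ — matching the definition of the bracket on $\mathcal{IHD}(T)$ itself. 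The kernel is $\ker\mu\subseteq\langle T,T\rangle$, which is central because $\mu$ being a \emph{central} extension means $\ker\mu$ acts trivially, so $[\ker\mu,\langle T,T\rangle]=0$, and from (\ref{boo}) $[\ker\mu,T]=\mu^{\alinv}(\ker\mu)T=0$; hence $\ker\nu=\ker\mu$ is a central ideal and $\nu$ is a central extension.

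\textbf{Main obstacle.} The genuine difficulty is the degree-$0$ part of the Jacobi identity for (\ref{boo}): showing the cross-terms $\langle\mu^{\alinv}(X)b,c\rangle$, $\langle b,\mu^{\alinv}(X)c\rangle$ and their cyclic images sum to zero. This is where the full strength of the ternary Hom-Nambu identity enters (via (\ref{need1}) and the bracket formula on $\langle T,T\rangle$), and one must be careful that the identities are applied in the isotope Lie algebra $\mathrm{End}(T)^{\alinv}$ with the twisted composition $X\circ\alinv\circ Y$ rather than naive composition — the placement of $\alinv$ is exactly what makes the Hom-version work, and it is the easiest place to make a sign or twisting error. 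Everything else is formal consequence of results quoted from \cite{oleg} and the earlier lemmas.
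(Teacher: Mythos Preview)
The paper omits this proof entirely as ``a straightforward calculation,'' and your outline is precisely the natural way to carry it out; the structure (skew-symmetry by grading, Jacobi by grading with the degree-$0$ cross terms handled via the module-action identity and (\ref{need1}), grading read off from (\ref{boo}), and $\ker\nu=\ker\mu$ central) is correct. Two small points: the antisymmetry $\langle a,b\rangle=-\langle b,a\rangle$ already holds in $T\wedge T$ before quotienting, so it is not a consequence of the relations (\ref{e1})--(\ref{e2}); and the stated codomain $\mathcal{IHD}(T)$ for $\nu$ is evidently a typo for $\mathcal{GHE}(T)=\mathcal{IHD}(T)\oplus T$, which is what caused the awkwardness in your surjectivity paragraph --- once you read the target as $\mathcal{GHE}(T)$, surjectivity is immediate since $\nu=\mu\oplus\mathrm{id}_T$.
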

\begin{rem}
\emph{For what follows we will need a canonical realization of $\mathcal{U}(T)$ inside the category $\mathcal{TL}$. That is we will need to define an $\alpha_{\mathcal{U}}:\mathcal{U}(T)\rightarrow\mathcal{U}(T)$ which is an automorphism. The next Lemma defines this automorphism.}
\end{rem}
\begin{lem}\label{themoneyshot}
 \emph{Let $\alpha_{\mathcal{U}}:\mathcal{U}(T)\rightarrow\mathcal{U}(T)$, be an extension of $\alpha$, i.e. $\alpha_{\mathcal{U}}|_T\doteq \alpha$, and  we define its extension as $\alpha_{\mathcal{U}}(\langle a,b\rangle)\doteq\langle\alpha(a),\alpha(b)\rangle$ for all $a,b\in T$, then $\alpha_{\mathcal{U}}$ is an automorphism.}
\end{lem}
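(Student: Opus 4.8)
The plan is to verify that $\alpha_{\mathcal{U}}$ is a well-defined linear bijection that respects the bracket (\ref{boo}); bijectivity is then clinched by exhibiting an explicit inverse built from $\alpha^{-1}$, exactly as in Lemma \ref{poop}. The first task is well-definedness on $\langle T, T\rangle = (T\wedge T)/A(T\wedge T)$: the assignment $a\wedge b\mapsto \langle\alpha(a),\alpha(b)\rangle$ is clearly a well-defined linear map $T\wedge T\to\langle T,T\rangle$ since $(a,b)\mapsto\langle\alpha(a),\alpha(b)\rangle$ is bilinear and alternating, so I only need to check it kills $A(T\wedge T)$. Since $\alpha$ is an algebra automorphism, $D_{\alpha(a)\alpha(b)}=\alpha\circ D_{ab}\circ\alpha^{-1}$ (this is just the regularity relation, the same identity used implicitly in Lemma \ref{poop}), so applying $\alpha\otimes\alpha$ to the spanning elements (\ref{e1}), (\ref{e2}) of $A(T\wedge T)$ — i.e. to $D_{ab}(a\wedge b)$ and $D_{ab}(c\wedge d)+D_{cd}(c\wedge d)$ — produces elements of the same two forms in $\alpha$-transformed variables, hence again lie in $A(T\wedge T)$. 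Therefore $\alpha_{\mathcal{U}}$ descends to $\langle T,T\rangle$, and together with $\alpha_{\mathcal{U}}|_T=\alpha$ we get a well-defined linear endomorphism of $\mathcal{U}(T)=\langle T,T\rangle\oplus T$.

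Next I would check compatibility with the bracket. Write a general element as $X+a$ with $X\in\langle T,T\rangle$, $a\in T$. Using (\ref{boo}),
\[
\alpha_{\mathcal{U}}\big([X+a,Y+b]\big)=\alpha_{\mathcal{U}}\big([X,Y]+\langle a,b\rangle\big)+\alpha\big(\mu^{\alinv}(X)b-\mu^{\alinv}(Y)a\big).
\]
On the other side,
\[
[\alpha_{\mathcal{U}}(X+a),\alpha_{\mathcal{U}}(Y+b)]=\big[\alpha_{\mathcal{U}}(X)+\alpha(a),\alpha_{\mathcal{U}}(Y)+\alpha(b)\big]
=\big([\alpha_{\mathcal{U}}(X),\alpha_{\mathcal{U}}(Y)]+\langle\alpha(a),\alpha(b)\rangle\big)+\big(\mu^{\alinv}(\alpha_{\mathcal{U}}(X))\alpha(b)-\mu^{\alinv}(\alpha_{\mathcal{U}}(Y))\alpha(a)\big).
\]
Matching the $T$-components and the $\langle T,T\rangle$-components separately reduces the claim to three facts: $\alpha_{\mathcal{U}}(\langle a,b\rangle)=\langle\alpha(a),\alpha(b)\rangle$ (definition), $\alpha_{\mathcal{U}}([X,Y])=[\alpha_{\mathcal{U}}(X),\alpha_{\mathcal{U}}(Y)]$ on $\langle T,T\rangle$, and the intertwining identity $\alpha\circ\mu^{\alinv}(X)=\mu^{\alinv}(\alpha_{\mathcal{U}}(X))\circ\alpha$ as operators on $T$. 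The last of these follows from $\mu(\langle\alpha(a),\alpha(b)\rangle)=D_{\alpha(a)\alpha(b)}=\alpha D_{ab}\alpha^{-1}$, hence $\mu^{\alinv}(\alpha_{\mathcal{U}}\langle a,b\rangle)=\alinv\alpha D_{ab}\alpha^{-1}=\alpha(\alinv D_{ab})\alpha^{-1}=\alpha\,\mu^{\alinv}(\langle a,b\rangle)\,\alpha^{-1}$, which rearranges to the required intertwining; extending by linearity in $X$ handles all of $\langle T,T\rangle$. The bracket identity $\alpha_{\mathcal{U}}([X,Y])=[\alpha_{\mathcal{U}}(X),\alpha_{\mathcal{U}}(Y)]$ on $\langle T,T\rangle$ is checked on generators $X=\langle a,b\rangle$, $Y=\langle c,d\rangle$ using $[\langle a,b\rangle,\langle c,d\rangle]=\langle D^{\alinv}_{ab}c,d\rangle+\langle c,D^{\alinv}_{ab}d\rangle$ together with the same operator identity $D^{\alinv}_{\alpha(a)\alpha(b)}\alpha=\alpha D^{\alinv}_{ab}$.

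Finally, for bijectivity I would mimic the end of the proof of Lemma \ref{poop}: define $\beta:\mathcal{U}(T)\to\mathcal{U}(T)$ by $\beta|_T\doteq\alinv$ and $\beta(\langle a,b\rangle)\doteq\langle\alinv(a),\alinv(b)\rangle$, which is well-defined by the same argument applied to $\alinv$ (also an automorphism), and observe $\alpha_{\mathcal{U}}\circ\beta=\beta\circ\alpha_{\mathcal{U}}=\mathrm{Id}_{\mathcal{U}(T)}$ since $\alpha\circ\alinv=\mathrm{Id}_T$ and $\langle\alpha\alinv(a),\alpha\alinv(b)\rangle=\langle a,b\rangle$. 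Hence $\alpha_{\mathcal{U}}$ is a Lie algebra automorphism, and (being direct-sum-decomposition preserving) a graded one, placing $(\mathcal{U}(T),\alpha_{\mathcal{U}})$ in $\mathcal{TL}$. The main obstacle is the well-definedness step: one must be sure that the normalizing relation $D_{\alpha(a)\alpha(b)}=\alpha D_{ab}\alinv$ — equivalently, the regularity of $\alpha$ as an algebra automorphism — is exactly what forces $(\alpha\otimes\alpha)(A(T\wedge T))\subseteq A(T\wedge T)$; everything downstream is then bookkeeping with the two explicit formulas for the bracket on $\mathcal{U}(T)$ and $\langle T,T\rangle$.
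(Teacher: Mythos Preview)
Your proposal is correct and follows essentially the same three-step architecture as the paper's proof: first show that $\alpha\otimes\alpha$ preserves $A(T\wedge T)$ so that $\alpha_{\mathcal U}$ descends to $\langle T,T\rangle$, then verify the homomorphism property against the bracket~(\ref{boo}), and finally obtain bijectivity by building the analogous extension of $\alpha^{-1}$. The only stylistic difference is that you organize the homomorphism check around the conjugation identity $D_{\alpha(a)\alpha(b)}=\alpha\,D_{ab}\,\alpha^{-1}$ (equivalently $D^{\alinv}_{\alpha(a)\alpha(b)}\circ\alpha=\alpha\circ D^{\alinv}_{ab}$), whereas the paper expands each term of $[\langle a,b\rangle+c,\langle d,e\rangle+f]$ explicitly; these are the same computation packaged differently.
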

\begin{proof}
First we need to verify that $\alpha_{\mathcal{U}}$ is well defined on $\langle T,T\rangle$. It will therefore suffice to show that the map $\gamma:T\wedge T\rightarrow T\wedge T$ defined by $\gamma(a\wedge b)\doteq\alpha(a)\wedge\alpha(b)$ for all $a,b\in T$ is invariant with respect to $A(T\wedge T)$, that is $\gamma(X)\in A(T\wedge T)$, for all $X\in A(T\wedge T)$. We will verify this by checking that the elements which span $A(T\wedge T)$, that is (\ref{e1}) and (\ref{e2}) are invariant. So we may calculate for all $a,b,c,d\in T$,
\begin{align*}
\gamma(D_{ab}(a\wedge b))= & \gamma(\alinv([a,b,a])\wedge b)+\gamma(a\wedge\alinv([a,b,b])) \\
= & [a,b,a]\wedge\alpha(b) +\alpha(a)\wedge [a,b,b]\\
= & D_{\alpha(a)\alpha(b)}(\alpha(a)\wedge\alpha(b))\in A(T\wedge T)
\end{align*}
and
\begin{align*}
\gamma & (D_{ab}(c\wedge d)+D_{cd}(a\wedge b))\\
= & \gamma(\alinv([a,b,c])\wedge d)+\gamma(c\wedge\alinv([a,b,d]))\\
& + \gamma(\alinv([c,d,a])\wedge b)+\gamma(a\wedge\alinv([c,d,b]))\\
= & [a,b,c]\wedge\alpha(d)+\alpha(c)\wedge [a,b,d] \\
& + [c,d,a]\wedge\alpha(b)+\alpha(a)\wedge [c,d,b] \\
= & D_{\alpha(a)\alpha(b)}(\alpha(c)\wedge\alpha(d))+D_{\alpha(c)\alpha(d)}(\alpha(a)\wedge\alpha(b))\in A(T\wedge T).
\end{align*}
Next we check that $\alpha_{\mathcal{U}}$ is a Lie algebra homomorphism, 
\begin{align*}
\alpha_{\mathcal{U}}([\langle a,b\rangle+c,\langle d,e\rangle+f])= & \alpha_{\mathcal{U}}(\langle D_{ab}^{\alinv}d,e\rangle +\langle d,D_{ab}^{\alinv}e\rangle) \\
& + \alpha_{\mathcal{U}}(\langle c,f\rangle+D_{ab}^{\alinv} f-D_{de}^{\alinv} c) \\
= & \langle D_{ab}d,\alpha(e)\rangle+\langle \alpha(d),D_{ab}e\rangle \\
& + \langle \alpha(c),\alpha(f)\rangle +D_{ab}f-D_{de}c
\end{align*}
and
\begin{align*}
[\alpha_{\mathcal{U}}  (\langle a,b\rangle  + & c ),\alpha_{\mathcal{U}}(\langle d,e\rangle+f)]\\
 = & [\langle\alpha(a),\alpha(b)\rangle+\alpha(c),\langle\alpha(d),\alpha(e)\rangle+\alpha(f)]\\
 =& \langle D^{\alinv}_{\alpha(a)\alpha(b)}\alpha(d),\alpha(e)\rangle+\langle \alpha(d),D_{\alpha(a)\alpha(b)}^{\alinv}\alpha(e)\rangle \\
 & + \langle\alpha(c),\alpha(f)\rangle+D^{\alinv}_{\alpha(a)\alpha(b)}\alpha(f)-D^{\alinv}_{\alpha(d)\alpha(e)}\alpha(c)\\
 = & \langle D_{ab}d,\alpha(e)\rangle+\langle\alpha(d),D_{ab}e\rangle \\
 & + \langle\alpha(c),\alpha(f)\rangle+D_{ab}f-D_{de}c
\end{align*}
thus $\alpha_{\mathcal{U}}$ is a Lie algebra homomorphism.

Finally we have that  $\alpha_{\mathcal{U}}$ is an automorphism since we can build a similar extension of $\alinv$, and that extension is indeed the inverse of $\alpha_{\mathcal{U}}$.
\end{proof}

%MAIN THEOREM
\subsection{Universal Property of $\mathcal{U}(T)$}
Next we consider $T$ as a subspace of $\mathcal{U}(T)$. It follows from (\ref{boo}) that $T$ generates $\mathcal{U}(T)$. 
\begin{thm}\label{main}
\emph{For every regular Hom-Lie triple system $(T,[$ , , ],$\alpha$) the canonical inclusion map $\iota_T:T\rightarrow \mathcal{U}(T)$ is a universal imbedding.}
\end{thm}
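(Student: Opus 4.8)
The plan is to verify the universal property directly. First I would check that $\iota_T$ is genuinely an imbedding, i.e., a Hom-LTS homomorphism from $(T,[,,],\alpha)$ to $\mathfrak{T}\,\mathcal{U}(T)$; this means confirming $\iota_T\circ\alpha=\alpha_{\mathcal{U}}\circ\iota_T$ (immediate from Lemma \ref{themoneyshot} since $\alpha_{\mathcal{U}}|_T=\alpha$) and that $\alpha_{\mathcal{U}}([[\iota_T(a),\iota_T(b)],\iota_T(c)])=\iota_T([a,b,c])$. The latter follows from the product formula (\ref{boo}): $[\iota_T(a),\iota_T(b)]=\langle a,b\rangle$, then $[\langle a,b\rangle,c]=\mu^{\alinv}(\langle a,b\rangle)c=D_{ab}^{\alinv}c=\alinv([a,b,c])$, and applying $\alpha_{\mathcal{U}}=\alpha$ on $T$ recovers $[a,b,c]$, exactly as in the proof for the general imbedding $\iota$.

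Next, given an arbitrary imbedding $\epsilon:T\to\mathfrak{T}L$ with $(L,\alpha_L)\in\mathcal{TL}$, I would construct the morphism $\varphi:\mathcal{U}(T)\to L$. Since $T$ generates $\mathcal{U}(T)$, uniqueness is automatic once existence is established, so the work is to show $\varphi$ is well defined. On the degree-one part set $\varphi|_T=\epsilon$; on $\langle T,T\rangle$ set $\varphi(\langle a,b\rangle)=[\epsilon(a),\epsilon(b)]_L$ (the honest Lie bracket in $L$). Well-definedness requires showing this kills $A(T\wedge T)$, i.e., that $[\epsilon(a),\epsilon(b)]=0$ whenever $a\wedge b$ maps into the span of the elements (\ref{e1}) and (\ref{e2}). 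In fact one must be slightly more careful: the subtlety is that $\langle a,b\rangle$ in $\mathcal{U}(T)$ is a coset, so I must check that $[\epsilon(a),\epsilon(b)]_L$ depends only on the image of $a\wedge b$ in $\langle T,T\rangle$. Because $\epsilon$ is a Hom-LTS homomorphism into $\mathfrak{T}L$, we have $\alpha_L([[\epsilon(a),\epsilon(b)],\epsilon(c)])=\epsilon([a,b,c])$, equivalently $[[\epsilon(a),\epsilon(b)],\epsilon(c)]=\alpha_L^{-1}\epsilon([a,b,c])=\epsilon(\alinv[a,b,c])$; summing cyclically and using (\ref{lts2}) together with the Jacobi identity in $L$ shows the "inner derivation" $\mathrm{ad}[\epsilon(a),\epsilon(b)]$ annihilates $\epsilon(T)$ precisely when $D_{ab}$ annihilates $T$ in the relevant sense, which is what forces the relations (\ref{e1})–(\ref{e2}) to die. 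This is the step I expect to be the main obstacle: carefully tracking the $\alpha$'s and $\alpha_L$'s so that the images of the spanning relations (\ref{e1}) and (\ref{e2}) of $A(T\wedge T)$ land in the kernel, and handling the fact that $\langle T,T\rangle$ sits in $\mathcal{U}(T)$ only up to the central ideal $A(T\wedge T)$.

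Once $\varphi$ is well defined as a linear map, I would verify it is a Lie algebra homomorphism by checking it respects (\ref{boo}) on each of the three graded pieces: $[\langle a,b\rangle,\langle c,d\rangle]$, $[\langle a,b\rangle,c]$, and $[a,b]$. Each reduces, via the defining relation of $\epsilon$ as a homomorphism into $\mathfrak{T}L$ and the Jacobi identity in $L$, to an identity among brackets of $\epsilon(x)$'s; for instance $\varphi([\langle a,b\rangle,c])=\varphi(D_{ab}^{\alinv}c)=\epsilon(\alinv[a,b,c])=[[\epsilon(a),\epsilon(b)],\epsilon(c)]=[\varphi(\langle a,b\rangle),\varphi(c)]$. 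Then I check $\varphi$ intertwines the automorphisms, $\alpha_L\circ\varphi=\varphi\circ\alpha_{\mathcal{U}}$, which on $T$ is the hypothesis that $\epsilon$ intertwines $\alpha$ and $\alpha_L$, and on $\langle T,T\rangle$ follows from $\alpha_{\mathcal{U}}(\langle a,b\rangle)=\langle\alpha(a),\alpha(b)\rangle$ and $\alpha_L$ being a Lie homomorphism on $L$. This makes $\varphi$ a morphism in $\mathcal{TL}$. Finally, $\mathfrak{T}\varphi\circ\iota_T=\epsilon$ holds by construction since $\varphi|_T=\epsilon$, and uniqueness of $\varphi$ follows because $T$ generates $\mathcal{U}(T)$ as a Lie algebra, so any morphism agreeing with $\epsilon$ on $T$ is forced everywhere. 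This completes the verification that $\iota_T$ is universal.
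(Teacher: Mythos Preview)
Your proposal is correct and follows essentially the same route as the paper: verify $\iota_T$ is an imbedding via Lemma~\ref{themoneyshot} and the bracket formula~(\ref{boo}); define $\varphi$ by $\varphi|_T=\epsilon$ and $\varphi(\langle a,b\rangle)=[\epsilon(a),\epsilon(b)]_L$; prove well-definedness by showing the spanning elements (\ref{e1}) and (\ref{e2}) of $A(T\wedge T)$ are killed; then check the homomorphism and intertwining conditions, with uniqueness coming from the fact that $T$ generates $\mathcal{U}(T)$. One small remark: your description of the well-definedness step is slightly oblique---the paper does not argue via ``$\mathrm{ad}[\epsilon(a),\epsilon(b)]$ annihilates $\epsilon(T)$ iff $D_{ab}$ annihilates $T$'' but simply computes directly, using $\epsilon(\alinv[a,b,c])=[[\epsilon(a),\epsilon(b)],\epsilon(c)]_L$ and the Jacobi identity in $L$, that the images of (\ref{e1}) and (\ref{e2}) collapse to $[x,x]_L=0$ and $[x,y]_L+[y,x]_L=0$ respectively with $x=[\epsilon(a),\epsilon(b)]_L$, $y=[\epsilon(c),\epsilon(d)]_L$.
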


%MAIN PROOF
\begin{proof}
First it is straightforward consequence of Lemma \ref{themoneyshot} that $\iota_T\circ\alpha=\alpha_{\mathcal{U}}\circ\iota_T$, and we have
\begin{align*}
\alpha_\mathcal{U}([[\iota_T(a),\iota_T(b)],\iota_t(c)])= & \alpha_\mathcal{U}(D^{\alinv}_{ab}(c))= \alpha_{\mathcal{U}}(\alinv([a,b,c]))\\
= & [a,b,c]=\iota([a,b,c])
\end{align*}
for $a,b,c\in T$, thus $\iota_T$ is indeed an imbedding.

Now let $(L,$[ , ]) be a Lie algebra and let $\epsilon: T\rightarrow\mathfrak{T}L$ be an imbedding, i.e. there exists a Lie algebra automorphism $\alpha_L:L\rightarrow L$ such that for every $a,b,c\in T$,
\begin{equation}\label{yep}
\epsilon([a,b,c])=\alpha_L([[\epsilon(a),\epsilon(b)]_L,\epsilon(c)]_L)
\end{equation}
and
\begin{equation*}
\epsilon\circ\alpha= \alpha_L\circ\epsilon
\end{equation*}
and since both $\alpha$ and $\alpha_L$ are invertible (since they are automorphisms) we have
\begin{equation}\label{duh}
\epsilon\circ\alinv=\alinv_L\circ\epsilon.
\end{equation}
Thus by combining (\ref{yep}) and (\ref{duh}) we arrive at,
\begin{align*}
\epsilon(\alinv([a,b,c]))= & \epsilon([\alinv(a) ,\alinv(b),\alinv(c)])\\
= & \alpha_L([[\epsilon(\alinv(a)),\epsilon(\alinv(b))],\epsilon(\alinv(c))]_L)\\
=& \alpha_L([[\alinv_L(\epsilon(a)), \alinv_L(\epsilon(b))],\alinv_L(\epsilon(c))]_L)\\
= &(\alpha_L\circ\alinv_L)([[\epsilon(a),\epsilon(b)],\epsilon(c)]_L)\\
= & [[\epsilon(a),\epsilon(b)],\epsilon(c)]_L
\end{align*}
To prove the universality of the canonical inclusion we will need to construct a Lie algebra homomorphism $\varphi: \mathcal{U}\rightarrow L$ extending $\epsilon$ and proving that it is unique, then show that this is indeed a morphism in the category $\mathcal{TL}$, that is $\varphi\circ\alpha_{\mathcal{U}}=\alpha_L\circ\varphi$. 

To build this we first claim that there is a well-defined map $\langle\epsilon,\epsilon\rangle:\langle T,T\rangle\rightarrow L$ such that 
\begin{equation}\label{yeppers}
\langle\epsilon,\epsilon\rangle(\langle a, b\rangle)=[\epsilon(a),\epsilon(b)]_L
\end{equation} 
for every $a,b\in T$. To show that this map does not depend on the choice of representative (and is thus well defined) it will suffice to show that $A(T\wedge T)$ is in the kernel of the map $\zeta:T\wedge T\rightarrow L$ defined by $\zeta(x\wedge y)=[\epsilon(a),\epsilon(b)]_L$ for all $a,b\in T$. We will verify this by checking that the elements which span $A(T\wedge T)$, that is (\ref{e1}) and (\ref{e2}) are sent to zero by $\zeta$. First note that, 
\begin{equation*}
D_{ab}(a\wedge b)=\alinv([a,b,a])\wedge b+a\wedge\alinv([a,b,b])
\end{equation*}
and
\begin{align*}
D_{ab}(c\wedge d)+D_{cd}(a\wedge b)=  & \alinv( [a,b,c])\wedge d+c\wedge\alinv([a,b,d]) \\
& + \alinv([c,d,a])\wedge b+a\wedge\alinv([c,d,b])
\end{align*}
for all $a,b,c,d\in T$, and therefore we may calculate,
\begin{align*}
\zeta( \alinv & ([  a,b,a])\wedge b+a\wedge\alinv([a,b, b])) \\= & [\epsilon(\alinv([a,b,a])),\epsilon(b)]_L+[\epsilon(a),\epsilon(\alinv([a,b,b]))]_L\\
= & [[[\epsilon(a),\epsilon(b)]_L,\epsilon(a)]_L,\epsilon(b)]_L+[\epsilon(a)[[\epsilon(a),\epsilon(b)]_L,\epsilon(b)]_L]_L \\
= & [[\epsilon(a),\epsilon(b)]_L,[\epsilon(a),\epsilon(b)]_L]_L= 0 
\end{align*}
and
\begin{align*}
\zeta(\alinv &([a ,b,c])\wedge d+c\wedge\alinv([a,b,d])+\alinv([c,d,a])\wedge b+a\wedge\alinv([c,d,b])) \\
= & [\epsilon (\alinv([a,b,c]), \epsilon(d)]_L + [\epsilon(c),\epsilon(\alinv([a,b,d]))]_L \\
 & + [\epsilon(\alinv([c,d,a])),\epsilon(b)]_L + [\epsilon(a),\epsilon(\alinv([c,d,b]))]_L \\
 = & [[[\epsilon(a),\epsilon(b)]_L,\epsilon(c)]_L,\epsilon(d)]_L + [\epsilon(c),[[\epsilon(a),\epsilon(b)]_L,\epsilon(d)]_L]_L \\
 & + [[[\epsilon(c),\epsilon(d)]_L,\epsilon(a)]_L,\epsilon(b)]_L+[\epsilon(a),[[\epsilon(c),\epsilon(d)]_L,\epsilon(b)]_L]_L \\
 = & [[\epsilon(a),\epsilon(b)]_L,[\epsilon(c),\epsilon(d)]_L]_L+[[\epsilon(c),\epsilon(d)]_L,[\epsilon(a),\epsilon(b)]_L]_L =0.
\end{align*}

Now since $T$ generates the Lie algebra $\mathcal{U}(T)$, the identities (\ref{yep}) and (\ref{yeppers}) imply that the map $\varphi$, defined by 
\begin{equation}\label{var}
\varphi(X+a)\doteq\langle\epsilon,\epsilon\rangle(X)+\epsilon(a)
\end{equation}
for $X\in\langle T,T\rangle$ and $a\in T$, is a Lie algebra homomorphism. The next condition we need to show for $\varphi$ is that $\mathfrak{T}\varphi$ is a Hom-LTS homomorphism, that is $\varphi$ is a morphism of the category $\mathcal{T}\mathcal{L}$. So we need to show that
\begin{equation}\label{hush}
 \varphi\circ\alpha_{\mathcal{U}}=\alpha_L\circ\varphi. 
\end{equation} 
Yet for $a\in T$, $\epsilon(\alpha(a))=\alpha_L(\epsilon(a))$, thus (\ref{hush}) follows from Lemma \ref{themoneyshot} and the fact that $T$ generates $\mathcal{U}(T)$. Finally by construction we have $\varphi|_T=\epsilon$, and since $T$ generates $\mathcal{U}(T)$, there is only one such homomorphism $\varphi$.
\end{proof}

%EQUIVALENT CATEGORY
\section{A category of Lie algebras\\ equivalent to $\mathcal{RLTS}$}
In this section we determine a sub-category of $\mathcal{TL}$ equivalent to $\mathcal{RLTS}$. 

\subsection{The functor $\mathfrak{A}$}
The universal property of the algebra $\mathcal{U}(T)$ gives rise to the existence of a left adjoint to the forgetful functor $\mathfrak{T}:\mathcal{TL}\rightarrow\mathcal{RLTS}$ discussed in Section 3. That adjoint is the functor $\mathfrak{A}:\mathcal{RLTS}\rightarrow\mathcal{TL}$ which sends every regular Hom-LTS, $(T$, [ , , ],$\alpha_T$) to the algebra $\mathcal{U}(T)$ and every Hom-LTS homomorphism (to a Hom-LTS $(S$, [ , , ],$\alpha_S$)) $\vartheta:T\rightarrow S$  to the morphism $\varphi:\mathcal{U}(T)\rightarrow\mathcal{U}(S)$ defined by (\ref{var}) for the imbedding $\epsilon=\iota_S\circ\vartheta$. These formulas also imply that the functor $\mathfrak{A}$ is faithful. These statements do need a bit of verification. For instance we need to show that $\epsilon\circ\alpha_T=\alpha_{\mathcal{U}(S)}\circ\epsilon$. Yet this is easily verified since $\vartheta$ is a Hom-LTS homomorphism, that is $\vartheta\circ\alpha_T=\alpha_S\circ\vartheta$ and thus we have,
\begin{equation*}
\epsilon\circ\alpha_T=\iota_S\circ\vartheta\circ\alpha_T=\iota_S\circ\alpha_S\circ\vartheta=\alpha_{\mathcal{U}(S)}\circ\iota_S\circ\vartheta=\alpha_{\mathcal{U}(S)}\circ\epsilon.
\end{equation*}

It turns out the most natural setting is to consider $\mathfrak{A}$ as a functor to the category $\mathcal{TL}_{\mathbb{Z}_2}$ which has objects as pairs $(L, \alpha_L)$ where $L$ is a  $\mathbb{Z}_2$ graded Lie algebra and $\alpha_L$ is a graded automorphism, and with morphisms as graded homomorphisms with the commutative relation from section 3. This is most natural since this makes $\mathfrak{A}$ both full and faithful.  Since $\mathcal{TL}_{\mathbb{Z}_2}$ is a subcategory of $\mathcal{TL}$ we will refrain from changing notations, and denote the functor $\mathfrak{A}: \mathcal{RLTS}\rightarrow\mathcal{TL}_{\mathbb{Z}_2}$. Notice this makes sense, since $\alpha_{\mathcal{U}}$ is indeed a graded automorphism.

%Furthermore, $\mathfrak{A}:\mathcal{RLTS}\rightarrow\mathcal{TL}_{\mathbb{Z}_2}$ has a right adjoint $\mathcal{R}$ which is described as follows: for a $\mathbb{Z}_2$ graded Lie algebra $L=L_1\oplus L_2$ with a graded automorphism $\alpha_L$, the odd component $L_1$ is a Hom-LTS relative to the product $[a,b,c]=\alpha_L([[a,b],c])$ for all $a,b,c\in L_1$, so we set $\mathcal{R}L=L_1$. If $K=K_0\oplus K_1$ is a $\mathbb{Z}_2$ graded Lie algebra with $\alpha_K$ a graded automorphism, and $\vartheta:L\rightarrow K$ is a graded homomorphism, such that $\vartheta\circ\alpha_L=\alpha_K\circ\vartheta$, then the restriction of $\vartheta$ on $L_1$ gives us a Hom-LTS homomorphism which is taken to be $\mathcal{R}\vartheta$. Theorem \ref{main} implies that $\iota_T:T\rightarrow\mathcal{R}\mathcal{U}(T)\subset\mathcal{U}(T)$ is a universal arrow from $T$ to $\mathcal{RU}(T)$. For convenience of references we restate Theorem \ref{main} for $\mathbb{Z}_2$-graded algebras. 
%\begin{thm}\label{newmain}
%\emph{Let $L=L_0\oplus L_1$ be a $\mathbb{Z}_2$ graded Lie algebra, with $\alpha_L$ a graded Lie automorphism, and let $(T$, [ , , ],$\alpha_T)$ be a regular Hom-LTS. For every Hom-LTS homomorphism $\vartheta:T\rightarrow L_1$ there is a unique graded homomorphism $\hat{\vartheta}:\mathcal{U}(T)\rightarrow L$ extending $\vartheta$. This homomorphism is defined by the formula: $\hat{\vartheta}(\sum_I\langle t_i,t'_i\rangle + t)=\sum_i[\vartheta(t_i),\vartheta(t'_i)]+\vartheta(t)$. }
%\end{thm}

Since $\mathfrak{A}:\mathcal{RLTS}\rightarrow\mathcal{TL}_{\mathbb{Z}_2}$ is full and faithful, the category $\mathcal{RLTS}$ is equivalent to the full subcategory of $\mathcal{TL}_{\mathbb{Z}_2}$ whose objects are the Lie algebras isomorphic to the algebras of the form $\mathcal{U}(T)$ paired with appropriate graded Lie automorphisms. This is analogous to the results in \cite{oleg}, thus we may use Corollary 4.8 in \cite{oleg} and obtain our final result.

\begin{thm}\label{duh}
\emph{The category $\mathcal{RLTS}$ is equivalent to the category of pairs $(L,\alpha)$ where $L$ is a centrally 0-closed $\mathbb{Z}_2$-graded Lie algebra generated by its odd component, and $\alpha$ is a graded automorphism.}
\end{thm}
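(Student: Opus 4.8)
The plan is to assemble the equivalence from pieces already developed and then invoke the cited structure theory of \cite{oleg}. First I would record the consequence of Theorem \ref{main}: the universal imbedding gives, for each regular Hom-LTS $T$, an object $(\mathcal{U}(T),\alpha_{\mathcal{U}})$ of $\mathcal{TL}_{\mathbb{Z}_2}$ (using Lemma \ref{themoneyshot} and the $\mathbb{Z}_2$-grading remark of Section 4), and the universal property says precisely that $\mathfrak{A}$ is left adjoint to $\mathfrak{T}$. The key categorical step is then to verify that the unit $\iota_T:T\to\mathfrak{T}\mathcal{U}(T)$ is an isomorphism of Hom-LTS for every $T$; this follows because $\iota_T$ is injective by construction of $\mathcal{U}(T)=\langle T,T\rangle\oplus T$ (the $T$-summand embeds) and surjective onto the odd component $\mathfrak{T}\mathcal{U}(T)$ since the triple product $[[\iota_T a,\iota_T b],\iota_T c]$ followed by $\alpha_{\mathcal{U}}^{-1}$ lands back in $T$ and equals $\iota_T([a,b,c])$, as computed in the proof of Theorem \ref{main}. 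Once the unit is a natural isomorphism, standard adjunction formalism makes $\mathfrak{A}$ full and faithful, so $\mathcal{RLTS}$ is equivalent to the full subcategory of $\mathcal{TL}_{\mathbb{Z}_2}$ on the essential image of $\mathfrak{A}$, i.e. on objects $(L,\alpha)$ with $L$ graded-isomorphic to some $\mathcal{U}(T)$ and $\alpha$ the induced automorphism.

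The remaining work is to identify that essential image intrinsically. Here I would quote Corollary 4.8 of \cite{oleg}, which characterizes the Lie algebras of the form $\mathcal{U}(T)$ (for an ordinary Lie triple system $T$) as exactly the centrally $0$-closed $\mathbb{Z}_2$-graded Lie algebras generated by their odd component. The point to check is that the Hom-deformation does not alter this characterization: the underlying graded Lie algebra $\mathcal{U}(T)$ constructed in Section 4 is built from $\langle T,T\rangle$, $A(T\wedge T)$, and the central extension $\mu$ exactly as in \cite{oleg} — the twist $\alpha$ enters only through $D_{ab}^{\alinv}$ in the bracket and through the auxiliary automorphism $\alpha_{\mathcal{U}}$, neither of which changes the module $\langle T,T\rangle\oplus T$ nor the centrality of $\nu$. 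Hence $L$ underlies some $\mathcal{U}(T)$ (in the Hom-sense) if and only if it is a centrally $0$-closed $\mathbb{Z}_2$-graded Lie algebra generated by its odd part, and the extra datum of a graded automorphism $\alpha$ is exactly what is needed to recover the twisting maps of $T$ via $\mathfrak{T}$. Combining with the equivalence from the previous paragraph yields the stated description.

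I would expect the main obstacle to be making precise the claim that \cite{oleg}'s Corollary 4.8 transfers verbatim: one must argue that every centrally $0$-closed $\mathbb{Z}_2$-graded Lie algebra $L$ generated by its odd component $L_1$, equipped with a graded automorphism $\alpha$, actually arises as $(\mathcal{U}(T),\alpha_{\mathcal{U}})$ for a genuinely \emph{regular} Hom-LTS $T$. The natural candidate is $T=L_1$ with twisting map $\alpha|_{L_1}$ and triple product $[a,b,c]_T\doteq\alpha^{-1}([[a,b],c]_L)$; one then checks that this is a regular Hom-LTS — identities (\ref{lts1})–(\ref{lts3}) follow from the Jacobi identity and the compatibility of $\alpha$ with the bracket, much as in Lemma \ref{funclem} — and that central $0$-closedness forces the natural map $\mathcal{U}(L_1)\to L$ to be an isomorphism carrying $\alpha_{\mathcal{U}}$ to $\alpha$. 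This last surjectivity-plus-injectivity argument, the genuinely nontrivial input, is exactly the content being imported from \cite{oleg}; everything else is bookkeeping about how $\alpha$ rides along with the construction.
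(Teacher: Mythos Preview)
Your overall strategy --- show that $\mathfrak{A}:\mathcal{RLTS}\to\mathcal{TL}_{\mathbb{Z}_2}$ is full and faithful and then invoke Corollary~4.8 of \cite{oleg} to identify the essential image --- is exactly the route the paper takes, and your final paragraph on how the Hom-twist rides along with the construction of \cite{oleg} is more explicit than anything the paper itself writes down.

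There is, however, a genuine gap in the step you flag as ``key.'' As defined in the paper (via Lemma~\ref{funclem}), the functor $\mathfrak{T}$ sends $(L,\alpha_L)$ to the \emph{entire} module $L$ equipped with the triple product $\alpha_L([[\,\cdot\,,\cdot\,],\cdot\,])$, not merely to its odd component. Hence $\mathfrak{T}\,\mathcal{U}(T)=\langle T,T\rangle\oplus T$ as a $k$-module, the unit $\iota_T:T\to\mathfrak{T}\,\mathcal{U}(T)$ is \emph{not} surjective, and the ``unit is an isomorphism $\Rightarrow$ left adjoint is full and faithful'' argument collapses. Indeed $\mathfrak{A}$ is not full as a functor into $\mathcal{TL}$: a non-graded morphism $\mathcal{U}(T)\to\mathcal{U}(S)$ need not arise from any Hom-LTS map $T\to S$.

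The paper sidesteps this by passing to $\mathcal{TL}_{\mathbb{Z}_2}$ with \emph{graded} morphisms and arguing full-and-faithfulness directly: faithfulness because $T$ generates $\mathcal{U}(T)$, fullness because any graded morphism $\mathcal{U}(T)\to\mathcal{U}(S)$ carries the odd part $T$ into the odd part $S$, and that restriction is a Hom-LTS homomorphism. If you prefer the adjunction-theoretic phrasing, the clean repair is to replace $\mathfrak{T}$ on $\mathcal{TL}_{\mathbb{Z}_2}$ by the odd-part functor $(L,\alpha)\mapsto (L_1,\ \alpha\!\circ\![[\,\cdot\,,\cdot\,],\cdot\,]\big|_{L_1^3},\ \alpha|_{L_1})$; with that right adjoint the unit really is the identity on $T$ and your argument goes through verbatim. (A small related slip in your last paragraph: the recovered triple product on $L_1$ should be $\alpha([[a,b],c]_L)$, not $\alpha^{-1}([[a,b],c]_L)$, in line with Lemma~\ref{funclem}.)
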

\begin{rem}
\emph{In the above theorem since $L$ is generated by its odd component, $\alpha$ is uniquely defined by its action on $L_0$, this can be noticed since for any $l\in L_1$ there exists $l_i\,l'_i\in L_0$ such that}
\begin{equation*}
l=\sum[l_i,l'_i]
\end{equation*}
\emph{thus}
\begin{equation*}
\alpha(l)=\sum[\alpha(l_i),\alpha(l'_i)].
\end{equation*}
\end{rem}

%\section{Homological characterization}
%Finally we combine Theorem \ref{duh} from the previous section with Theorem A in \cite{oleg} to arrive at our final result. 
%\begin{thm}
%\emph{The category of regular Hom-LTS is equivalent to the category of pairs $(L,\alpha)$ where $L=L_0\oplus L_1$ is a $\mathbb{Z}_2$ graded Lie algebra such that L is generate by $L_1$, $\alpha$ is a graded automorphism, and one of the following equivalent conditions hold.}
%\begin{itemize}
%  \item \emph{The second graded homology group $H^{gr}_2(L)=0$. }
%  \item \emph{The second graded cohomology group $H^2_{gr}(L,M)=0$ for every module $M$ with the trivial grading $M=M_0$.}
%\end{itemize}
%
%\end{thm}

\section*{Acknowledgment}
I am indebted to Oleg N. Smirnov whom suggested this project and has been a constant mentor and advisor. Also I would like to give my gratitude to Deanna M. Caveny-Noecker for helping me with revisions and being there to listen to me think these topics out loud.

\end{document}